\documentclass[11pt]{amsart}
\usepackage{amsmath,amssymb,amsthm,latexsym,cite,cancel}
\usepackage[small]{caption}
\usepackage{graphicx,wasysym,overpic,tikz,color}
\usepackage{subfigure}
\usepackage{cite}
\usepackage[colorlinks=true,urlcolor=blue,
citecolor=red,linkcolor=blue,linktocpage,pdfpagelabels,
bookmarksnumbered,bookmarksopen]{hyperref}
\usepackage[italian,english]{babel}
\usepackage{units}
\usepackage{enumitem}
\usepackage[left=2.1cm,right=2.1cm,top=2.71cm,bottom=2.71cm]{geometry}
\usepackage[hyperpageref]{backref}
\usepackage{float,ulem}

\usepackage[colorinlistoftodos]{todonotes}
%


\makeatletter
\def\namedlabel#1#2{\begingroup
    #2%
    \def\@currentlabel{#2}%
    \phantomsection\label{#1}\endgroup
}
\makeatother

\newtheorem{theorem}{Theorem}[section]
\newtheorem{definition}[theorem]{Definition}
\newtheorem{proposition}[theorem]{Proposition}
\newtheorem{lemma}[theorem]{Lemma}

\newtheorem{corollary}[theorem]{Corollary}

\newcommand{\abs}[1]{\lvert#1\rvert}
\newcommand{\norm}[1]{\lVert#1\rVert}

\newcommand{\R}{\mathbb{R}}

\tikzstyle{nodo}=[circle,draw,fill,inner sep=0pt,minimum size=%
1.5mm]

\numberwithin{equation}{section}

\title[Normalized solutions to nonlinear Schr\"odinger equations]{Existence of normalized solutions to nonlinear Schr\"odinger equations on lattice graphs}

\author[Z. He]{Zhentao He}

\address[Z. He]{\newline\indent
	School of Mathematics
	\newline\indent
	East China University of Science and Technology
	\newline\indent
	Shanghai 200237, PR China }
\email{\href{mailto:hezhentao2001@outlook.com}{hezhentao2001@outlook.com}}

\author[C. Ji]{Chao Ji}

\address[C. Ji]{\newline\indent
	School of Mathematics
	\newline\indent
	East China University of Science and Technology
	\newline\indent
	Shanghai 200237, PR China }
\email{\href{mailto:jichao@ecust.edu.cn}{jichao@ecust.edu.cn}}

\author[Y. Tao]{Yifan Tao}

\address[Y. Tao]{\newline\indent
	School of Mathematics
	\newline\indent
	East China University of Science and Technology
	\newline\indent
	Shanghai 200237, PR China }
\email{\href{mailto:taoyf2002@126.com}{taoyf2002@126.com}}

\subjclass[2020]{35A15,  35Q55, 35R02.}
\date{\today}

\begin{document}
\begin{abstract}

In this paper, using a discrete Schwarz rearrangement on lattice graphs developed in \cite{DSR}, we study the existence of  global minimizers for the following functional $I:H^1\left(\mathbb{Z}^N\right)\to \R$,
$$I(u)=\frac{1}{2} \int_{\mathbb{Z}^N}|\nabla u|^2 \,d\mu-\int_{\mathbb{Z}^N} F(u)\, d\mu,$$
constrained on $S_m:=\left\{u \in H^1\left(\mathbb{Z}^N\right) \mid\|u\|_{\ell^2\left(\mathbb{Z}^N\right)}^2=m\right\}$,
where $N \geq 2$, $m>0$ is prescribed, $f \in C(\mathbb{R}, \mathbb{R})$ satisfying some technical assumptions and $F(t):=\int_0^t f(\tau) \,d\tau$. We prove the following minimization problem
$$
\inf_{u \in S_m} I(u)
$$
 has an excitation threshold $m^*\in [0,+\infty]$ such that
\begin{equation*}
    \inf_{u \in S_m} I(u)<0 \quad  \text{if and only if } m>m^*.
\end{equation*}
Based primarily on $m^* \in (0,+\infty)$ or $m^*=0$, we classify the problem into three    different cases: $L^2$-subcritical, $L^2$-critical and $L^2$-supercritical. Moreover,  for all three cases, under assumptions that we believe to be nearly optimal, we show that $m^*$ also separates the existence and nonexistence of global minimizers for $I(u)$ constrained on $S_{m}$.
\end{abstract}
\keywords{Nonlinear Schr\"odinger equations, Normalized solutions, Lattice graphs, Minimization method, Schwarz symmetric solutions}
\maketitle
\section{Introduction}
In this paper, we consider the existence of normalized solutions to the nonlinear Schr\"odinger equation on lattice graphs
\begin{equation}\label{eqpm}
\left\{\begin{aligned}
&-\Delta u  =f(u)+\lambda u \quad \text { in } \mathbb{Z}^N, \\
&\|u\|_{\ell^2\left(\mathbb{Z}^N\right)}^2  =m, \\
&u  \in H^1\left(\mathbb{Z}^N\right),
\end{aligned}\right.\tag{$P_{m}$}
\end{equation}
where $N \geq 2$, $f \in C(\mathbb{R}, \mathbb{R})$, $m>0$ is prescribed,  and the parameter $\lambda \in \mathbb{R}$ is part of the unknowns which arises  as a Lagrange multiplier.

Under mild assumptions on $f$, one can define a $C^1$ functional $I: H^1\left(\mathbb{Z}^N\right) \rightarrow \mathbb{R}$ by
$$
I(u):=\frac{1}{2} \int_{\mathbb{Z}^N}|\nabla u|^2 \,d\mu-\int_{\mathbb{Z}^N} F(u)\, d\mu,
$$
where $F(t):=\int_0^t f(\tau) \,d\tau$ for $t \in \mathbb{R}$. It is clear that solutions to problem \eqref{eqpm} correspond to critical points of
$I$ constrained to
$$
S_m:=\left\{u \in H^1\left(\mathbb{Z}^N\right) \mid\|u\|_{\ell^2\left(\mathbb{Z}^N\right)}^2=m\right\}.
$$
For future reference, the value $I(u)$ is called the energy of $u$.

In the past decade,  a vast literature is dedicated to the existence, multiplicity and stability of normalized solutions for the following problem
\begin{align}\label{P1}
	\left\{
	\begin{aligned}
		&-\Delta u= \lambda u + f(u) \textrm{ \ in \ } \R^N,\\
		& u \in H^{1}(\R^N), \\
		&\int_{\R^N}u^{2}dx=m,
	\end{aligned}
	\right.
\end{align}
where $N \geq 1$, $f \in C(\mathbb{R}, \mathbb{R})$, $m>0$ is prescribed,  and the parameter $\lambda \in \mathbb{R}$ is part of the unknowns which arises  as a Lagrange multiplier.
Problem \eqref{P1} is strongly related with the following time-dependent, nonlinear Schr\"{o}dinger equation
\[
\left\{
\begin{array}{ll}
i \dfrac{\partial \Psi}{\partial t}(t,x) = \Delta_x \Psi(t,x) + h(|\Psi(t,x)|)\Psi(t,x), & (t,x) \in \mathbb{R} \times \Omega, \\
\int_{\Omega} |\Psi(t,x)|^2\,dx = m &
\end{array}
\right.
\]
where $f(u)=h(|u|)u$, the prescribed mass \(\sqrt{m}\) appears in nonlinear optics and the theory of Bose-Einstein condensates (see \cite{Fibich, TVZ, Zhang}). Solutions \(u\) to \eqref{P1} correspond to standing waves \(\Psi(t,x) = e^{-i\lambda t} u(x)\) of the foregoing time-dependent equation. The prescribed mass represents the power supply in nonlinear optics or the total number of particles in Bose-Einstein condensates.

 In the seminar paper \cite{CL}, for the case of $L^{2}$-subcritical, Cazenave  and Lions proved the existence of normalized ground states by the minimization method and presented a general method which enables us to prove the orbital stability of some standing waves  for problem \eqref{P1}. However, the $L^{2}$-supercritical problem is totally different. One key difference between the $L^2$-subcritical case and $L^2$-supercritical case is that, for $L^2$-subcritical case, the energy functional $\bar{I}(u):=\int_{\R^N}\abs{\nabla u}^2\, dx-\int_{\R^N}F(u)\, dx$ is bounded from below on $\bar{S}_m:=\{u \in H^1(\R^N): \int_{\R^N}\abs{u}^2\, dx=m\}$, but, for $L^2$-supercritical case, one can show that  $\inf_{\bar{S}_m}\bar{I}(u)=-\infty$ by a scaling technique. In \cite{jeanjean1}, Jeanjean introduced the fibering map
$$
\tilde{I}(u,s) = \frac{e^{2s}}{2}\int_{\R^N}\abs{\nabla u}^2\, dx-\frac{1}{e^{sN}}\int_{\R^N}F(e^\frac{sN}{2}u(x))\, dx,
$$
where $F(t):=\int_0^tf(s)\,ds$. Using the scaling technique and Poho\v{z}aev identity, the author proved that $\bar{I}$ on $\bar{S}_m$ and $\tilde{I}$ on $\bar{S}_m \times\R$ satisfy the mountain pass geometry and their mountain pass levels are equal, then the author construct a bounded Palais-Smale sequence on $\bar{S}_m$ and obtained a radial solution to problem \eqref{P1}. It is worth noting that the scaling technique plays a crucial role  in the study of normalized solutions to problem \eqref{P1}. For further results related to \eqref{P1},  see \cite{ WeiWu, CCM,  AlvesThin, BartschJeanjeanSaove, Bartschmolle, BartschSaove, Bartosz, BellazziniJeanjeanLuo, CazenaveLivro, Jun, jeanjean1, JeanjeanJendrejLeVisciglia, JeanjeanLu, JeanjeanLu2020, JeanjenaLu2, JeanjeanLe, Nicola1, Nicola2} and the references therein.

In recent years, the various mathematical problems on graphs have been extensively investigated (see \cite{Gr, Gri1, hua2021existence, HX1, HLY, DSR, Stefanov,Weinstein,He}  and references therein).  In particular, in the monograph \cite{Gr}, Grigor'yan introduced the discrete Laplace operator on finite and infinite graphs; in \cite{HLY}, Huang, Lin and Yau proved two existence results of solutions to mean field equations on an arbitrary connected finite graph; in \cite{HX1}, applying the Nehari method, Hua and Xu studied the existence of ground states of the nonlinear Schr\"{o}dinger equation $-\Delta u+V(x) u=f(x, u)$ on the lattice graph $\mathbb{Z}^N$ where $f$ satisfies some growth conditions and the potential function $V$ is periodic or bounded. Due to our scope, we would like to mention the seminar paper \cite{Weinstein}, which appears to be the first to  consider problem $\inf_{u \in S_{m}} I(u)$. In \cite{Weinstein}, Weinstein studied the existence of solutions to problem $\inf_{u \in S_{m}} I(u)$ in the case  $f(u) =\abs{u}^{p-2}u$ with $p>2$. He proved the existence of an excitation threshold $m^*\in [0,+\infty)$ such that
\begin{equation}\label{eqet}
    \inf_{u \in S_{m}} I(u)<0 \quad  \text{if and only if } m>m^*.
\end{equation}
For $2<p<2+\frac{4}{N}$, it was shown that $m^*=0$, and then used minimization method to rule out the  vanishing and dichotomy cases, thus obtaining a solution to the problem $\inf _{u \in S_{m}} I(u)$; for $p\geq 2+\frac{4}{N}$,  the author proved that $m^*\in (0,+\infty)$, and again employed the minimization method to show that the problem $\inf _{u \in S_{m}} I(u)$ admits a solution if $m>m^*$,  but has no solution if $0<m<m^*$. However, it is worth pointing out that the proofs of problem $\inf_{u \in S_{m}} I(u)$ given in \cite[Theorem 7.1]{Weinstein} for the case $p\geq 2+\frac{4}{N}$ are incomplete.  In particular, the author excluded the dichotomy cases by a strict subadditivity inequality (see \cite{Lions}) in \cite[Theorem 7.1]{Weinstein}, but, in \cite[Theorem I.1]{Lions}, the strict subadditivity inequality was established under the assumption that $\inf _{u \in S_{\nu}} I(u)<0$ for all $0<\nu \leq m$, and thus, the inequality cannot hold for $p\geq 2+\frac{4}{N}$ on lattice graphs, because by \eqref{eqet}, we have $\inf _{u \in S_{\nu}} I(u)\geq 0$ for all $0 <\nu\leq m^*$.

For problem \eqref{P1} with $f(u) =\abs{u}^{p-2}u$ and  $2<p<2^*$, where $2^*:= \frac{2N}{N-2}$ if $N \geq 3$ and $2^* := +\infty$ if $N = 1, 2$, the classification of the exponent $p$ is based on whether the energy functional $\bar{I}$ is bounded from below on $\bar{S}_m$:
\begin{enumerate}[label=(\roman*)]
\item When $2<p<2+\frac{4}{N}$, the functional $\bar{I}$ is bounded from below on $\bar{S}_m$ and we say that the problem is purely \textbf{$L^2$-subcritical};
\item When $p=2+\frac{4}{N}$, there exists $m_1>0$ such that, $$\inf _{u \in \bar{S}_{m}} \bar{I}(u)=\begin{cases}
    0 \quad &\text{for } m \in (0,m_1]\\
    -\infty \quad&\text{for }m\in (m_1,+\infty),
\end{cases}$$
as shown in detail in \cite[Section 2.2]{CGIT}, and we say that the problem is purely \textbf{$L^2$-critical};
\item When $2+\frac{4}{N}<p<2^*$, the functional  $\bar{I}$ is not bounded from below on $\bar{S}_m$ and we say that the problem is purely \textbf{$L^2$-supercritical}.
\end{enumerate}
However, the problem \eqref{eqpm} on lattice graphs is fundamentally different from problem \eqref{P1}. In fact, as shown in Lemma \ref{lembounded} below, the functiona $I$ is bounded from below on $S_m$ for all $m>0$. Thus, classifying the values of $p$ based on whether $I$ is bounded from below on $S_m$ is no longer applicable.  For the convenience of future research, inspired by \cite{Weinstein}, we classify the exponent
$p$ for problem \eqref{eqpm} on lattice graphs using the same terminology as in the Euclidean setting,  based primarily on $m^* \in (0,+\infty)$ or $m^*=0$. Specifically, for $f(u) =\abs{u}^{p-2}u$ with  $p>2$:
\begin{enumerate}[label=(\roman*)]
\item When $2<p<2+\frac{4}{N}$, $m^*=0$ and the problem  is said to be purely \textbf{$L^2$-subcritical};
\item When $p=2+\frac{4}{N}$, $m^* \in (0,+\infty)$ and  the problem is termed  purely \textbf{$L^2$-critical};
\item When $p>2+\frac{4}{N}$, $m^* \in (0,+\infty)$ and we say that the problem is purely \textbf{$L^2$-supercritical}.
\end{enumerate}
More generally, in a view of Theorem \ref{th1} (iv) and Theorem \ref{th2}, we may classify the nonlinearity $f$ as follows:
\begin{enumerate}[label=(\roman*)]
\item If $f$ satisfies $\displaystyle \lim_{t \to 0}\frac{F(t)}{\abs{t}^{2+\frac{4}{N}}} = +\infty$, then $m^*=0$ and the problem is said to be \textbf{$L^2$-subcritical};
\item If $f$ satisfies $\displaystyle \limsup_{t \to 0}\frac{F(t)}{\abs{t}^{2+\frac{4}{N}}} < +\infty$ but $\frac{F(t)}{\abs{t}^{2+\frac{4}{N}}}$ fails to converge to $0$ as $t \to 0$, then $m^* \in (0,+\infty)$ and  the problem is said to be \textbf{$L^2$-critical};
\item If $\displaystyle \lim_{t \to 0}\frac{F(t)}{\abs{t}^{2+\frac{4}{N}}} = 0$, then $m^* \in (0,+\infty)$ and  the problem is said to be \textbf{$L^2$-supercritical}.
\end{enumerate}

In \cite{DSR}, Hajaiej, Han and Hua developed a discrete Schwarz rearrangement on lattice graphs. In particular, in \cite[Section 6.1]{DSR}, using the discrete Schwarz rearrangement, for all $m>0$, the authors proved the existence of solution to minimization problem
$$
\inf _{u \in S_{m}} \hat{I}(u),
$$
where
$$
\hat{I}(u)=\frac{1}{2} \int_{\mathbb{Z}^N}|\nabla u|^2 \,d\mu-\int_{\mathbb{Z}^N} F(x,u)\, d\mu
$$
with $F(x,t):=\int_0^tf(x,\tau)\,d\tau$ and $f(x,t)$ satisfying $L^2$-subcritical growth  according to our classification.
It is natural to ask whether there exist solutions of  equation \eqref{eqpm} in the $L^2$-critical and $L^2$-supercritical cases with general $f \in C(\R,\R)$. However, for the $L^2$-supercritical case, applying the method developed in \cite{jeanjean1}-originally used to obtain solutions to equation \eqref{P1} on $\mathbb{R}^N$- to the lattice graphs, some new difficulties arise. In particular, the scaling technique---commonly used in the analysis on $\mathbb{R}^N$---is no longer applicable, and the Poho\v{z}aev identity on lattice graphs remains unknown.

Motivated by the aforementioned works, in this paper we investigate the existence of solutions to the minimization problem $\inf _{u \in S_{m}} I(u)$  including $L^2$-critical and $L^2$-supercritical cases by using  a discrete Schwarz rearrangement argument developed in \cite[Section 6.1]{DSR}.
 Indeed, in Lemma \ref{lemachieved}, when $\inf _{u \in S_{m}} I(u)<0$, we prove the existence of positive solutions of the minimization problem $\inf _{u \in S_{m}} I(u)$ without using the strict subadditivity inequality.

We denote by $\mathbb{Z}^{N}$ the standard lattice graphs with the set of vertices
$$\left\{x=(x_{1},...,x_{N}):x_{i}\in\mathbb{Z},1\le i\le N\right\},$$
and the set of edges
\[E=\left\{(x,y):x,y\in\mathbb{Z}^{N},\sum_{i=1}^{N}|x_{i}-y_{i}|=1\right\}.\]
We denote the space of functions on $\mathbb{Z}^{N}$ by $C(\mathbb{Z}^{N})$. For $u\in C(\mathbb{Z}^{N})$, its support set is defined as $\operatorname{supp}(u):=\{x\in\mathbb{Z}^{N}:u(x)\neq 0\}$. Let $C_{c}(\mathbb{Z}^{N})$ be the set of all functions with finite support. Let $\mu$ be the counting measure on $\mathbb{Z}^{N}$, i.e., for any subset $A \subset \mathbb{Z}^{N}$, $\mu(A):=\#\{x: x \in A\}$. For any function $f$ on $\mathbb{Z}^{N}$, we write
$$
\int_{\mathbb{Z}^{N}} f d \mu:=\sum_{x \in \mathbb{Z}^{N}} f(x)d\mu,
$$
whenever it makes sense.

For any $1 \le p \le  \infty$, $\ell^p(\mathbb{Z}^{N})$ denotes the linear space of $p$-th integrable functions on $\mathbb{Z}^{N}$ equipped with the norm
$$
\|u\|_{\ell^p(\mathbb{Z}^{N})}:=
\begin{cases}
	\left( \sum_{x \in \mathbb{Z}^{N}} |u(x)|^p \right)^{1/p}, & 1 \le p <\infty, \\
	\sup_{x \in \mathbb{Z}^{N}} |u(x)|, & p = \infty.
\end{cases}
$$
In this paper, we shall write $\|u\|_{\ell^{p}(\mathbb{Z}^{N})}$ as $\|u\|_{p}$ for convenience, when there is no confusion.

For any function $u, v \in C(\mathbb{Z}^{N})$, we define the associated gradient form as
$$
\Gamma(u, v)(x):= \frac{1}{2} \sum_{y \sim x} (u(y)-u(x))(v(y)-v(x)) .
$$
Let $\Gamma(u):=\Gamma(u, u)$, and define
\begin{equation}\label{tidu}
	|\nabla u|(x):=\sqrt{\Gamma(u)(x)}=\left( \frac{1}{2} \sum_{y \sim x}(u(y)-u(x))^2\right)^{1 / 2} .
\end{equation}
Let $H^1(\mathbb{Z}^{N})$ be the completion of $C_c(\mathbb{Z}^{N})$ under the norm
$$\|u\|_{H^{1}(\mathbb{Z}^{N})}:=\left(\frac{1}{2} \sum_{x \in \mathbb{Z}^{N}} \sum_{y \sim x}(u(y)-u(x))^2+\sum_{x \in \mathbb{Z}^{N}}  u^2(x)\right)^{1 / 2}.$$
In this paper, we shall write $\|u\|_{H^{1}(\mathbb{Z}^{N})}$ as $\|u\|$ for convenience, when there is no confusion.
Our goal in this paper is to consider the existence of positive solutions to the minimization problem
$$
E_m=\inf _{u \in S_{m}} I(u).
$$
To this end, we will impose the following assumptions on the nonlinearity  $f \in C(\mathbb{R}, \mathbb{R})$:
\begin{itemize}[label=$f\arabic*$]
\item[(\namedlabel{f1}{f$_1$})]  $\displaystyle \lim_{t \rightarrow 0} \frac{f(t)}{t}=0$;
\item[(\namedlabel{f2}{f$_2$})]$\frac{F(t)}{t^2}$ is non-decreasing on $(-\infty,0)\cup(0,+\infty)$;
\item[(\namedlabel{f3}{f$_3$})] $F(t)\leq F(\abs{t})$ for all $t\in \R$;
  \item[(\namedlabel{f4}{f$_4$})] there exists $\zeta>0$ such that $F(\zeta)-2N\zeta^2>0$;
  \item[(\namedlabel{f5}{f$_5$})] $\displaystyle\limsup_{t \to 0}\frac{F(t)}{\abs{t}^{2+\frac{4}{N}}}<+\infty$;
   \item[(\namedlabel{f6}{f$_6$})]$\frac{F(t)}{t^2}$ is strictly increasing on $(-\infty,0)\cup(0,+\infty)$;
  \item[(\namedlabel{f7}{f$_7$})] $\displaystyle\lim_{t \to 0}\frac{F(t)}{\abs{t}^{2+\frac{4}{N}}}=+\infty$.
\end{itemize}
It is clear that \eqref{f2} is a weaker version of \eqref{f6}, and \eqref{f4} is a weaker version of $$
\lim_{t \rightarrow +\infty}\frac{F(t)}{t^2}=+\infty.
$$
We would like to highlight here that there are a great many functions $f$ satisfying (\ref{f1})-(\ref{f4}), for example,
$$f(t)=\abs{t}^{s-2}t\quad  \text{ with } s>2,$$
$$f(t)=\abs{t}^{s_1-2}t + \abs{t}^{s_2-2}t\quad  \text{ with } s_2,s_1>2,$$
$$f(t)=\abs{t}^{s_1-2}t + \abs{t}^{s_2-2}t\ln(1+\abs{t})\quad  \text{ with } s_2,s_1>2,$$
and
$$
f(t)=(e^{\abs{t}}-\abs{t}-1)\operatorname{sgn}t.
$$
\begin{theorem}\label{th1}
Assume that $f$ satisfies \eqref{f1}. $E_m> -\infty$ and the mapping $m \mapsto E_m$ is non-increasing and continuous.
Moreover,
\begin{enumerate}[label=(\roman*)]
\item there exists a uniquely determined number $m^* \in[0, +\infty]$ such that
$$
E_m=0 \text { if } 0<m \leq m^* ( 0<m<+\infty\text{ when } m^*=+\infty), \quad \text{ and }\quad E_m<0 \text { if } m>m^* ;
$$
\item if in addition \eqref{f2} and \eqref{f3} hold, then, when $m>m^*$, the infimum $E_m$ is achieved by some positive Schwarz symmetric (as defined in Definition \ref{defss}) function $u \in S_m$;
\item if in addition \eqref{f4} hold, then $m^* \in [0,\zeta^2)$;
\item if in addition \eqref{f5} hold, then $m^*\in (0,+\infty]$;
\item if in addition \eqref{f6} hold, then, when $0<m<m^*$, $E_m$ is not achieved.
\end{enumerate}
\end{theorem}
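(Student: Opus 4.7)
My plan is to establish the basic estimates first (lower bound, $E_m \leq 0$, monotonicity, continuity), then deduce (i) and the ``location'' statements (iii) and (iv), and finally tackle the compactness argument for (ii) and the nonexistence statement (v).

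To see $E_m > -\infty$, I would use the pointwise bound $\|u\|_\infty \leq \|u\|_2 = \sqrt{m}$ on $\mathbb{Z}^N$ together with \eqref{f1} to get $|F(u(x))| \leq C(m)\,u(x)^2$ uniformly, hence $\int F(u)\,d\mu \leq C(m)\,m$. For the complementary bound $E_m \leq 0$, the spread test function $v_R = \sqrt{m/|B_R|}\,\mathbf{1}_{B_R}$ has $\int|\nabla v_R|^2\,d\mu = O(R^{-1})$ and $\int F(v_R)\,d\mu = |B_R|\,F(\sqrt{m/|B_R|}) = o(1)$ by \eqref{f1}, so $I(v_R) \to 0$. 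Monotonicity of $m \mapsto E_m$ then follows by truncating a near-minimizer $u \in S_{m_1}$ to finite support and gluing a similar spread function of mass $m_2 - m_1$ far away, making the supports disjoint so $I(u+v) = I(u) + I(v)$. Continuity is proved via the scaling $u \mapsto \sqrt{m'/m}\,u$ combined with dominated convergence to handle the $F$-term. With these in hand, (i) follows by defining $m^* := \sup\{m > 0 : E_m = 0\}$, and (iii) by testing $\zeta\delta_0$: direct computation gives $\int|\nabla\delta_0|^2\,d\mu = 2N$, so $I(\zeta\delta_0) = N\zeta^2 - F(\zeta) < 0$ by \eqref{f4}. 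For (iv), the bounds \eqref{f1} and \eqref{f5} together yield $|F(t)| \leq C|t|^{2+4/N}$ for $|t|$ below some threshold, which combined with a Gagliardo-Nirenberg-type inequality on $\mathbb{Z}^N$ of the form $\|u\|_{2+4/N}^{2+4/N} \leq C_{GN}\|\nabla u\|_2^2\|u\|_2^{4/N}$ gives $I(u) \geq \tfrac{1}{2}(1 - 2CC_{GN}\,m^{2/N})\|\nabla u\|_2^2 \geq 0$ for $m$ small, hence $m^* > 0$.

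The central step is (ii). My plan is to first invoke \eqref{f3} to replace a minimizing sequence by its absolute value (which does not increase $I$), then apply the discrete Schwarz rearrangement of \cite{DSR} to produce a nonnegative Schwarz symmetric minimizing sequence $u_n$ (preserving $\int F$ and $\|u\|_2$, not increasing $\int|\nabla u|^2$). Because the superlevel sets of Schwarz symmetric functions are balls centered at $0$, $u_n(x)^2\,|B_{|x|}| \leq m$, giving the uniform decay $u_n(x) \leq C\sqrt{m}\,|x|^{-N/2}$ and therefore uniform $\ell^q$-tails for every $q > 2$ (here $N \geq 2$ is used to make $|B_R|$ grow like $R^N$). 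After extracting a pointwise limit $u \geq 0$ by Arzelà-Ascoli, the bound $|F(t)| \leq \tfrac{\varepsilon}{2}t^2 + C_\varepsilon|t|^q$ (from \eqref{f1} and continuity of $F$, for any $q > 2$) combined with the uniform $\ell^q$-tails gives $\int F(u_n)\,d\mu \to \int F(u)\,d\mu$, and weak lower semicontinuity of the Dirichlet form gives $I(u) \leq E_m$. The limit $u$ cannot be identically $0$: otherwise $u_n(0) = \|u_n\|_\infty \to 0$ would force $\int F(u_n)\,d\mu \to 0$ via \eqref{f1}, so $\liminf I(u_n) \geq 0 > E_m$. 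The crucial remaining step---showing $\|u\|_2^2 = m$ without resorting to strict subadditivity---is where \eqref{f2} enters: if $m' := \|u\|_2^2 < m$, monotonicity and $I(u) \leq E_m$ force $I(u) = E_m < 0$; setting $\lambda := \sqrt{m/m'} > 1$, \eqref{f2} gives $F(\lambda t) \geq \lambda^2 F(t)$ pointwise, and therefore
\[
I(\lambda u) \leq \frac{\lambda^2}{2}\|\nabla u\|_2^2 - \lambda^2\int F(u)\,d\mu = \lambda^2 I(u) < I(u) \leq E_m,
\]
with $\lambda u \in S_m$, contradicting the definition of $E_m$. Hence $u \in S_m$ is a nonnegative Schwarz symmetric minimizer. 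Strict positivity follows from the Euler-Lagrange equation $-\Delta u = f(u) + \lambda u$: if $u(x_0) = 0$, then $u \geq 0$ and $f(0) = 0$ force $u$ to vanish at all neighbors of $x_0$, so $u \equiv 0$ by connectedness---contradicting $u(0) > 0$.

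Finally, (v) is proved by contradiction: suppose $u \in S_m$ achieves $E_m = 0$ with $0 < m < m^*$. Since $u \not\equiv 0$, the strict monotonicity in \eqref{f6} gives $\sum_x F(\lambda u(x)) > \lambda^2 \sum_x F(u(x))$ for every $\lambda > 1$, so $I(\lambda u) < \lambda^2 I(u) = 0$, which means $E_{\lambda^2 m} < 0$, i.e., $\lambda^2 m > m^*$ for every $\lambda > 1$; letting $\lambda \downarrow 1$ yields $m \geq m^*$, contradicting $m < m^*$. The hard part will be the compactness step in (ii): the strict subadditivity inequality used in the Euclidean case fails on $\mathbb{Z}^N$ in the $L^2$-critical and $L^2$-supercritical regimes (as noted in the introduction regarding \cite{Weinstein}), and my plan sidesteps this by using the scaling properties of $I$ together with the monotonicity of $F(t)/t^2$ from \eqref{f2}.
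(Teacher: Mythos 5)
Your proposal is correct and follows essentially the same route as the paper: the same spread/truncate‑and‑translate constructions for $E_m\le 0$ and monotonicity, the same scaling argument for continuity, the same absolute‑value‑plus‑Schwarz‑rearrangement compactness scheme for (ii) (your direct tail bound $u_n(x)\lesssim \sqrt{m}\,|x|^{-N/2}$ is exactly what underlies the compact embedding the paper cites) with the mass defect ruled out by the \eqref{f2} inequality $I(\lambda u)\le\lambda^2 I(u)$, and the same arguments for (iii)–(v). The one point to tighten is right‑continuity of $m\mapsto E_m$: the near‑minimizers for $E_{m+\delta}$ vary with $\delta$, so dominated convergence applied to a fixed function does not suffice, and you need the uniform estimate $\bigl|I(u)-I\bigl(\sqrt{m/(m+\delta)}\,u\bigr)\bigr|\le C\bigl(1-\sqrt{m/(m+\delta)}\bigr)$ for all $u$ with $\|u\|_2^2\le 2m$, which follows from \eqref{f1} and the equivalence of $H^1(\mathbb{Z}^N)$ and $\ell^2(\mathbb{Z}^N)$, exactly as in the paper's mean‑value‑theorem argument.
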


We note that, in \cite[Proposition 4.2]{Weinstein}, for $f(t)=\abs{t}^{p-2}t$ with $p\geq 2+\frac{4}{N}$, Weinstein proved
        $$
        m^*= \left(\frac{p}{2}J^{p,N}\right)^\frac{2}{p-2},
        $$
        where
        $$
        J^{p,N}:=\inf_{u \in H^1(\mathbb{Z}^N)\backslash\{0\}}\frac{\norm{u}_2^{{p-2}}\norm{\nabla u}_2^2}{\norm{u}_p^p}.
        $$
Moreover, in a view of Theorem \ref{th1}, we have
\begin{corollary}
Assume that $f$ satisfies \eqref{f1}-\eqref{f4}. Then, there exists $m^* \in [0,\zeta^2)$ such that, for all $m>m^*$, the infimum $E_m$ is achieved by some positive Schwarz symmetric (as defined in Definition \ref{defss}) function $u \in S_m$.
\end{corollary}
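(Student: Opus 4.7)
The statement is an immediate consequence of Theorem \ref{th1}, so the ``proof'' is essentially a matter of assembling the right items from that theorem. My plan is therefore not to do any new analysis but simply to check that the hypotheses \eqref{f1}--\eqref{f4} trigger exactly the parts of Theorem \ref{th1} that are needed.

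First, since \eqref{f1} is in force, Theorem \ref{th1}(i) produces the excitation threshold $m^\ast \in [0,+\infty]$ with the property that $E_m = 0$ for $0 < m \le m^\ast$ and $E_m < 0$ for $m > m^\ast$. Next, invoking the additional hypothesis \eqref{f4}, Theorem \ref{th1}(iii) sharpens this to $m^\ast \in [0,\zeta^2)$, so in particular $m^\ast$ is finite. This already yields the range stated in the corollary.

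It remains to identify, for every $m > m^\ast$, a minimizer with the claimed symmetry and sign properties. Since \eqref{f1}, \eqref{f2} and \eqref{f3} all hold, Theorem \ref{th1}(ii) applies, giving that $E_m$ is attained by some positive Schwarz symmetric function $u \in S_m$ whenever $m > m^\ast$. Combining these three invocations of Theorem \ref{th1} gives exactly the statement of the corollary; no further argument, and in particular no use of \eqref{f5}--\eqref{f7}, is required.

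There is no real obstacle here, because the substantive work has already been packaged into Theorem \ref{th1}; the only thing one has to be careful about is that the three parts of the theorem are being used under \emph{compatible} hypotheses. Item (ii) needs \eqref{f1}--\eqref{f3}, item (iii) needs \eqref{f1} and \eqref{f4}, and the existence and characterization of $m^\ast$ itself only needs \eqref{f1}; all of these are guaranteed by the assumption \eqref{f1}--\eqref{f4} of the corollary, and none of them conflict with one another. Hence the corollary follows immediately.
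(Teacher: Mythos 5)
Your proposal is correct and matches the paper exactly: the corollary is stated there as an immediate consequence of Theorem \ref{th1}, obtained by combining items (i), (iii) (for $m^\ast\in[0,\zeta^2)$ under \eqref{f4}) and (ii) (for the positive Schwarz symmetric minimizer under \eqref{f1}--\eqref{f3}). No further argument is needed.
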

The next theorem considering $L^2$-subcritical case and covers the results in \cite[Theorem 6.1]{DSR} when $f(x,u)=f(u)$. The argument involving scaling technique due to \cite{JeanjeanLu,JeanjenaLu2} is not available, however, we made the surprising discovery that, in the $L^2$-subcritical case, the assumption \eqref{f2} (or (F$_5$) in \cite[Theorem 6.1]{DSR} for $f(x,u)=f(u)$) is not necessary to prove the existence of positive solutions to the minimization problem $E_m=\inf _{u \in S_{m}} I(u)$.

We would like to emphasize that a wide class of nonlinearities $f$
satisfy assumptions \eqref{f1}, \eqref{f3} and \eqref{f7}. For instance, typical examples include:
$$f(t)=\abs{t}^{s-2}t\quad  \text{ with } 2<s<2+\frac{4}{N},$$
and
$$f(t)=\abs{t}^{s_1-2}t \pm \abs{t}^{s_2-2}t\quad  \text{ with } 2<s_1<2+\frac{4}{N} \text{ and }s_2>s_1.$$
\begin{theorem}\label{th2}
    Assume that $f$ satisfies \eqref{f1} and \eqref{f7}. $0>E_m>-\infty$ and the mapping $m \mapsto E_m$ is strictly decreasing and continuous. Moreover, if in addition \eqref{f3} hold, then, for all $m>0$, the infimum $E_m$ is achieved by some positive Schwarz symmetric (as defined in Definition \ref{defss}) function $u \in S_m$.
\end{theorem}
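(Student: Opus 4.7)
The result splits into four pieces: boundedness below $E_m>-\infty$ and continuity of $m\mapsto E_m$ are already provided by Theorem~\ref{th1}, so the only new input is (B) the negativity $E_m<0$ for every $m>0$ and, derived from it, (C) strict monotonicity of $m\mapsto E_m$; the existence of a positive Schwarz-symmetric minimizer is then the application of Lemma~\ref{lemachieved} invoked in the introduction, which requires only $E_m<0$ together with \eqref{f1} and \eqref{f3}, and crucially not \eqref{f2}.

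For (B), equivalently $m^*=0$, I would build a slowly-varying lattice test function. Fix a nonnegative $\phi\in C_c^\infty(\R^N)$ with $\int_{\R^N}\phi^2\,dy=1$ and, for large integers $R$, set
\[
 u_R(x):=c_R\,\phi(x/R),\qquad c_R:=\Bigl(\,m\big/\textstyle\sum_{x\in\mathbb{Z}^N}\phi(x/R)^2\,\Bigr)^{1/2},
\]
so $u_R\in S_m$. Riemann-sum comparison yields $c_R^2=(1+o(1))\,m R^{-N}$, while an edgewise Taylor expansion of $\phi((x+e_i)/R)-\phi(x/R)$ combined with a further Riemann-sum argument gives
\[
 \|\nabla u_R\|_2^2=(1+o(1))\,\frac{m}{R^2}\int_{\R^N}|\nabla\phi|^2\,dy\qquad\text{as } R\to\infty.
\]
By \eqref{f7}, for every $A>0$ there is $\delta>0$ with $F(t)\ge A|t|^{2+4/N}$ for $|t|\le\delta$; since $\|u_R\|_\infty\le c_R\|\phi\|_\infty\to 0$, this bound applies on all of $\mathbb{Z}^N$ for $R$ large, and comparing sums to integrals again delivers
\[
 \int_{\mathbb{Z}^N}F(u_R)\,d\mu\ge A\,(1+o(1))\,\frac{m^{1+2/N}}{R^2}\int_{\R^N}\phi^{2+4/N}\,dy.
\]
Choosing $A$ strictly larger than $\tfrac{1}{2m^{2/N}}\int|\nabla\phi|^2\big/\int\phi^{2+4/N}$ (which \eqref{f7} permits once $R$ is large enough) forces $I(u_R)<0$, so $E_m<0$. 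This is where I expect the main difficulty: the lattice admits no genuine mass-preserving dilation, so the clean scaling proof on $\R^N$ must be replaced by quantitative Riemann-sum and Taylor-expansion estimates tailored to finitely supported smooth profiles.

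With (B) in hand, strict monotonicity (C) follows from a disjoint-bump construction. Given $0<m_1<m_2$ and $\varepsilon>0$, density of $C_c(\mathbb{Z}^N)$ in $H^1(\mathbb{Z}^N)$ together with continuity of $I$ yields compactly supported $v_1\in S_{m_1}$ and $v_2\in S_{m_2-m_1}$ with $I(v_1)\le E_{m_1}+\varepsilon$ and $I(v_2)\le E_{m_2-m_1}+\varepsilon$. Translating $v_2$ by a lattice vector $y$ so large that $\operatorname{supp}(v_1)$ and $y+\operatorname{supp}(v_2)$ sit at graph distance at least $2$, and setting $w:=v_1+v_2(\cdot-y)$, one obtains $w\in S_{m_2}$ and, because no edge joins the two supports and $F(0)=0$, $I(w)=I(v_1)+I(v_2)$. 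Letting $\varepsilon\downarrow 0$ gives $E_{m_2}\le E_{m_1}+E_{m_2-m_1}<E_{m_1}$, the last inequality being (B) applied to $m_2-m_1>0$. Part (D), the achievement of $E_m$ by a positive Schwarz-symmetric function, then follows directly from Lemma~\ref{lemachieved}, and together with the boundedness and continuity provided by Theorem~\ref{th1} this closes the proof of Theorem~\ref{th2}.
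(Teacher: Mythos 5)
Your treatment of the negativity $E_m<0$ and of the strict monotonicity is essentially the paper's: the paper also produces, for each $m>0$, a spread-out family $u_k\in S_m$ with $\|u_k\|_\infty\to 0$, $\|\nabla u_k\|_2^2\le C_1k^{-2}$ and $\|u_k\|_{2+4/N}^{2+4/N}\ge C_2k^{-2}$ (citing constructions from \cite{Stefanov} and \cite{DSR} rather than writing out the Riemann-sum and Taylor estimates as you do), and it then combines the subadditivity $E_{a+b}\le E_a+E_b$ of Lemma \ref{lemnonin0} with $E_b<0$ to get strict decrease, exactly as in your disjoint-bump argument. Those parts are correct.

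The gap is in your last step. You assert that Lemma \ref{lemachieved} ``requires only $E_m<0$ together with \eqref{f1} and \eqref{f3}, and crucially not \eqref{f2}'', and you then apply it as a black box. That claim is false for the lemma as stated and proved: \eqref{f2} is used exactly once, but indispensably, in the step that upgrades the limit $u$ of the Schwarz-symmetric minimizing sequence --- which a priori only satisfies $\|u\|_2^2\le m$, $u\neq 0$ and $I(u)\le E_m$ --- to an element of $S_m$, via $E_m\le I(tu)\le t^2I(u)\le t^2E_m$ with $t=m/\|u\|_2^2\ge 1$. Without \eqref{f2} nothing in the lemma's own argument prevents the minimizing sequence from losing mass in the limit, so your part (D) is unproved as written. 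The repair is short, and you already hold the needed ingredient: rerun the lemma's argument only up to the point where $u$ is Schwarz symmetric, nonzero, with $\|u\|_2^2=m_1\le m$ and $I(u)\le E_m$; if $m_1<m$, then $E_{m_1}\le I(u)\le E_m<E_{m_1}$ by the strict monotonicity you established in (C), a contradiction, whence $m_1=m$. This is precisely how the paper concludes the proof of Theorem \ref{th2}, and it is the real content of the introduction's remark that \eqref{f2} can be dispensed with in the $L^2$-subcritical case: the hypothesis is replaced by strict monotonicity of $m\mapsto E_m$, not simply dropped.
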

The rest of the paper is organized as follows. In Section \ref{secpre}, we introduce some basic facts on lattice graphs and present a discrete Schwarz rearrangement on lattice graphs due to \cite{DSR}.  In Section \ref{secproof}, using the discrete Schwarz rearrangement, we provide the proofs of Theorems \ref{th1} and \ref{th2}.

\section{Preliminaries}\label{secpre}
In this section, we recall some useful preliminaries.
\begin{proposition}\cite[Lemma 2.1]{Huang}
  \label{prointer}
Suppose $u\in \ell^{p}(\mathbb{Z}^{N})$, then $\norm{u}_q \leq \norm{u}_p$ for all $1 \leq p \leq q \leq \infty$.
\end{proposition}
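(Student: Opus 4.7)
The plan is to reduce everything to the elementary pointwise bound $|u(x)| \le \norm{u}_p$, which holds on $(\mathbb{Z}^N,\mu)$ precisely because $\mu$ is the counting measure: every singleton has mass exactly one, so a single summand cannot exceed the whole sum. This is where the discrete setting flips the usual $L^p$ inclusion direction, and it is the only structural fact needed.

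First I would dispose of the endpoint case $q=\infty$. For any fixed $x\in\mathbb{Z}^N$ and any $1\le p<\infty$,
$$
|u(x)|^p \le \sum_{y\in\mathbb{Z}^N}|u(y)|^p = \norm{u}_p^p,
$$
so $|u(x)|\le\norm{u}_p$, and taking the supremum in $x$ gives $\norm{u}_\infty\le\norm{u}_p$.

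For the generic case $1\le p\le q<\infty$, I would split the exponent as $|u(x)|^q = |u(x)|^{q-p}\,|u(x)|^p$ and insert the pointwise estimate from the previous step,
$$
|u(x)|^{q-p} \le \norm{u}_\infty^{q-p} \le \norm{u}_p^{q-p}.
$$
Summing over $x\in\mathbb{Z}^N$ then yields
$$
\norm{u}_q^q \;=\; \sum_{x\in\mathbb{Z}^N}|u(x)|^q \;\le\; \norm{u}_p^{q-p}\sum_{x\in\mathbb{Z}^N}|u(x)|^p \;=\; \norm{u}_p^q,
$$
and taking $q$-th roots finishes the argument.

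There is no genuine obstacle in this proposition; the nesting $\ell^p(\mathbb{Z}^N)\hookrightarrow\ell^q(\mathbb{Z}^N)$ for $p\le q$ is classical. The only subtlety worth flagging is that this ordering is the reverse of what holds on a finite-measure space, and it depends essentially on the counting measure assigning mass $\ge 1$ to each point, which is exactly what justifies the one-line inequality $|u(x)|\le\norm{u}_p$ that drives the whole proof.
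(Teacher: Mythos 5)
Your proof is correct and complete: the pointwise bound $|u(x)|\le\norm{u}_p$ from the counting measure, followed by the exponent split $|u(x)|^q=|u(x)|^{q-p}|u(x)|^p$, is exactly the standard argument for the nesting $\ell^p(\mathbb{Z}^N)\subset\ell^q(\mathbb{Z}^N)$. The paper does not prove this proposition itself but only cites it from an external reference, so there is nothing to compare against; your argument is the expected one and handles both the $q=\infty$ endpoint and the finite-exponent case properly.
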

The equivalence of $H^1(\mathbb{Z}^{N})$ and $\ell^2(\mathbb{Z}^{N})$ is well known, for a detailed proof, please refer to \cite{He,HX1}.
\begin{proposition}\label{lemequiv}
The spaces $H^1(\mathbb{Z}^{N})$ and $\ell^2(\mathbb{Z}^{N})$ are equivalent.
\end{proposition}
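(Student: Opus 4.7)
The plan is to show that the $H^1$-norm and the $\ell^2$-norm are equivalent on $C_c(\mathbb{Z}^N)$, and then conclude that the two completions coincide as a topological vector space. Since $C_c(\mathbb{Z}^N)$ is dense in both $H^1(\mathbb{Z}^N)$ (by definition) and in $\ell^2(\mathbb{Z}^N)$ (by a standard truncation), equivalence of the norms on this common dense subspace immediately upgrades to the equivalence of the completions.

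The trivial direction is $\|u\|_2 \le \|u\|_{H^1(\mathbb{Z}^N)}$, which follows directly from the definition $\|u\|_{H^1}^2 = \frac{1}{2}\sum_{x}\sum_{y\sim x}(u(y)-u(x))^2 + \sum_x u(x)^2$, because the first summand is nonnegative. The main content is therefore the reverse estimate $\|u\|_{H^1} \le C_N \|u\|_2$ for some constant $C_N$ depending only on $N$. I would obtain this by the elementary inequality $(u(y)-u(x))^2 \le 2u(y)^2 + 2u(x)^2$ applied pointwise, giving
\begin{equation*}
\tfrac{1}{2}\sum_{x\in\mathbb{Z}^N}\sum_{y\sim x}(u(y)-u(x))^2 \;\le\; \sum_{x\in\mathbb{Z}^N}\sum_{y\sim x}\bigl(u(y)^2 + u(x)^2\bigr).
\end{equation*}
Each vertex of $\mathbb{Z}^N$ has exactly $2N$ neighbours, so both $\sum_{x}\sum_{y\sim x}u(x)^2$ and (after relabeling via the symmetry of the adjacency) $\sum_{x}\sum_{y\sim x}u(y)^2$ equal $2N\|u\|_2^2$. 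This yields $\|\nabla u\|_2^2 \le 4N\|u\|_2^2$, hence $\|u\|_{H^1}^2 \le (1+4N)\|u\|_2^2$.

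Combining the two bounds gives
\begin{equation*}
\|u\|_2 \;\le\; \|u\|_{H^1(\mathbb{Z}^N)} \;\le\; \sqrt{1+4N}\,\|u\|_2 \qquad \text{for every } u\in C_c(\mathbb{Z}^N),
\end{equation*}
so the two norms are equivalent on $C_c(\mathbb{Z}^N)$. A Cauchy sequence with respect to one norm is Cauchy with respect to the other, and limits agree, so the completions are canonically identified as normed spaces with equivalent norms.

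There is essentially no obstacle here: the whole argument rests on the uniform bound $\deg(x)=2N$ for the lattice graph, which makes the discrete Laplacian a bounded operator on $\ell^2(\mathbb{Z}^N)$. The only subtle point worth spelling out carefully is the double-counting identity $\sum_x\sum_{y\sim x} u(y)^2 = 2N\|u\|_2^2$, which uses the symmetry of the edge relation $y\sim x\Leftrightarrow x\sim y$; this is the one line I would not skip in the written proof.
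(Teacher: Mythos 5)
Your proof is correct: the two-sided bound $\|u\|_2 \le \|u\|_{H^1} \le \sqrt{1+4N}\,\|u\|_2$ via $(u(y)-u(x))^2\le 2u(x)^2+2u(y)^2$ and the uniform degree $2N$ is exactly the standard argument, and the passage from equivalence of norms on the dense subspace $C_c(\mathbb{Z}^N)$ to identification of the completions is sound. The paper itself does not prove this proposition but defers to the cited references, where the same degree-counting argument appears.
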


To prove our main results, we introduce the discrete Schwarz rearrangement on $\mathbb{Z}^{N}$. For more details, please refer to \cite{DSR}. A nonnegative function $u:\mathbb{Z}^{N} \rightarrow \mathbb{R}_{+}$ is said to be admissible if $u\in C_{0}(\mathbb{Z}^{N})$, where $C_0(\mathbb{Z}^{N}) = \{ u \in C(\mathbb{Z}^{N}) : |\{ x : |u(x)| > t \}| < +\infty$, $\ \forall t > 0 \}$ is the space of functions that vanish at infinity. We write $C_{0}^{+}(\mathbb{Z}^{N})$ be the set of all admissible functions.
\begin{definition}\cite[Definition 4.17]{DSR}\label{defss}
	An admissible function $u \in   C_{0}^{+}\left(\mathbb{Z}^{N}\right)$ is called Schwarz symmetric if $u=R_{\mathbb{Z}^{N}}u$ where $R_{\mathbb{Z}^{N}}u$ is the Schwarz rearrangement of $u$.
\end{definition}
Let
$$\mathcal{S}(\mathbb{Z}^{N}):=\left\{u \in C_{0}^{+}\left(\mathbb{Z}^{N}\right): R_{\mathbb{Z}^{N}} u=u\right\}$$
be the set of all Schwarz symmetric functions on $\mathbb{Z}^{N}$, and let
$$\mathcal{S}^{p}(\mathbb{Z}^{N}):=\mathcal{S}(\mathbb{Z}^{N}) \cap \ell^{p}(\mathbb{Z}^{N}).$$
\begin{proposition}\cite[Proposition 4.15]{DSR}\label{prop4.15}
	Suppose that $u_n \in \mathcal{S}(\mathbb{Z}^N)$ and $u_n \to u$ pointwise, then $u \in \mathcal{S}(\mathbb{Z}^N)$.
\end{proposition}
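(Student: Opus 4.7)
The plan is to reduce Schwarz symmetry to a monotonicity condition along a canonical enumeration of $\mathbb{Z}^N$ that is manifestly stable under pointwise limits, and then separately address admissibility of the limit $u$.

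First I would recall from \cite{DSR} that the rearrangement $R_{\mathbb{Z}^N}$ is defined via a fixed enumeration $x_1, x_2, \ldots$ of $\mathbb{Z}^N$ ordered by distance to the origin (with a prescribed tie-breaking rule): for an admissible $v \geq 0$, the function $R_{\mathbb{Z}^N} v$ assigns the values of $v$ to the $x_k$ in non-increasing order. Consequently $v \in \mathcal{S}(\mathbb{Z}^N)$ if and only if $v \in C_0^+(\mathbb{Z}^N)$ and $v(x_1) \geq v(x_2) \geq \cdots \geq 0$. This reformulation is the whole point: Schwarz symmetry becomes a countable list of coordinatewise inequalities rather than a statement about level sets.

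For the monotonicity of the limit I would fix any $k \geq 1$. Since each $u_n \in \mathcal{S}(\mathbb{Z}^N)$, we have $u_n(x_k) \geq u_n(x_{k+1}) \geq 0$, and pointwise convergence gives $u(x_k) \geq u(x_{k+1}) \geq 0$. Hence $u$ is nonnegative and non-increasing along the enumeration; once admissibility is secured, this characterization immediately yields $R_{\mathbb{Z}^N} u = u$.

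The main obstacle I expect is verifying $u \in C_0^+(\mathbb{Z}^N)$, i.e.\ $\lim_{k \to \infty} u(x_k) = 0$. The non-increasing nonnegative sequence $\{u(x_k)\}$ converges to some $\ell \geq 0$, and I need $\ell = 0$. In every application of this proposition within the present paper, the relevant $u_n$ form a minimizing sequence in $S_m$ and are therefore uniformly bounded in $\ell^2(\mathbb{Z}^N)$; Fatou's lemma then gives $u \in \ell^2(\mathbb{Z}^N)$, which forces $\ell = 0$. Without such a uniform bound the purely pointwise claim can fail (for instance $u_n = \chi_{B_n}$ is Schwarz symmetric and converges pointwise to the constant function $1 \notin C_0^+$), so in practice the proposition is applied to pointwise limits that come equipped with a uniform $\ell^p$ bound ensuring admissibility.
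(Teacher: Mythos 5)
The paper offers no proof of this proposition: it is imported verbatim from \cite{DSR}, so there is no in-text argument to compare against. Your route --- characterizing membership in $\mathcal{S}(\mathbb{Z}^N)$ as non-increasing behaviour along the fixed enumeration $x_1,x_2,\dots$ underlying $R_{\mathbb{Z}^N}$, and then passing the coordinatewise inequalities $u_n(x_k)\ge u_n(x_{k+1})\ge 0$ to the pointwise limit --- is the natural and correct argument for the monotonicity part. You are also right to single out admissibility of the limit as the only genuine issue: with pointwise convergence alone the conclusion $u\in C_0^{+}(\mathbb{Z}^N)$ can fail, as your example of indicators of growing balls converging to the constant function $1$ shows, so the statement as quoted tacitly presupposes either that $u$ is admissible or that the $u_n$ carry a uniform $\ell^{p}$ bound. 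In every invocation in this paper that extra information is available: the $u_n$ are a minimizing sequence in $S_m$, hence $\|u_n\|_{2}^{2}=m$, and Fatou's lemma gives $u\in\ell^{2}(\mathbb{Z}^N)\subset C_{0}(\mathbb{Z}^N)$, which forces your limit $\ell$ to vanish. So the proposal is correct in the form in which the proposition is actually used; if the proposition is meant to stand alone, the admissibility hypothesis should be made explicit.
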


Let $1 \leq p < q < +\infty$, it follows that $\mathcal{S}^p(\mathbb{Z}^N) \subset \mathcal{S}^q(\mathbb{Z}^N)$, since $\ell^p(\mathbb{Z}^N) \subset \ell^q(\mathbb{Z}^N)$. Moreover, we have the following compact embedding result:
\begin{proposition}\cite[Theorem 4.16]{DSR}\label{thm4.16}
	Let $\{ u_n \}$ be a bounded sequence in $\mathcal{S}^p(\mathbb{Z}^N)$, $p \geq 1$. Then, for all $q > p$, there exists a sub-sequence of $\{ u_n \}$ convergent in $\ell^q(\mathbb{Z}^N)$.
\end{proposition}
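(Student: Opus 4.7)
The plan is to prove the compactness claim by combining a pointwise diagonal extraction with the uniform tail control that Schwarz symmetry forces on $\ell^p$-bounded sequences. First, since $\{u_n\}$ is bounded in $\ell^p(\mathbb{Z}^N)$, each $u_n$ is uniformly pointwise bounded ($|u_n(x)| \le \|u_n\|_p \le M$ for every $x$ and every $n$). By a standard Cantor diagonal argument along any enumeration of the countable set $\mathbb{Z}^N$, I would extract a subsequence (still denoted $\{u_n\}$) that converges pointwise to some nonnegative function $u : \mathbb{Z}^N \to \mathbb{R}$. Proposition \ref{prop4.15} gives $u \in \mathcal{S}(\mathbb{Z}^N)$, and Fatou's lemma applied to the counting measure yields $\|u\|_p \le \liminf_n \|u_n\|_p \le M$, hence $u \in \ell^p(\mathbb{Z}^N) \subset \ell^q(\mathbb{Z}^N)$ by Proposition \ref{prointer}.

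The crux of the argument is uniform decay at infinity. For each $t > 0$, Chebyshev's inequality gives
\[
\#\{x \in \mathbb{Z}^N : u_n(x) \ge t\} \le \frac{\|u_n\|_p^p}{t^p} \le \frac{M^p}{t^p}.
\]
Because $u_n$ is Schwarz symmetric, this super-level set is precisely the initial segment, of its own cardinality, of the fixed Schwarz ordering of $\mathbb{Z}^N$. Initial segments of cardinality at most $M^p/t^p$ are contained in one fixed finite subset $B(t) \subset \mathbb{Z}^N$ depending only on $M,p,t$ and not on $n$. Hence $0 \le u_n(x) < t$ whenever $x \notin B(t)$, uniformly in $n$; the same bound then holds for the pointwise limit $u$.

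With this tail estimate in hand, the upgrade from pointwise to $\ell^q$ convergence is routine. Given $\varepsilon > 0$, pick $B = B(\varepsilon)$ as above and split
\[
\|u_n - u\|_q^q \;=\; \sum_{x \in B} |u_n(x) - u(x)|^q \;+\; \sum_{x \notin B} |u_n(x) - u(x)|^q.
\]
The first sum is a finite sum and tends to $0$ as $n \to \infty$ by pointwise convergence. For the second, I would use $|u_n(x)|, |u(x)| \le \varepsilon$ outside $B$ together with $q > p$, so that $|u_n(x)|^q \le \varepsilon^{q-p}|u_n(x)|^p$ and similarly for $u$, giving
\[
\sum_{x \notin B} |u_n - u|^q \;\le\; 2^{q-1}\varepsilon^{q-p}\bigl(\|u_n\|_p^p + \|u\|_p^p\bigr) \;\le\; 2^q M^p\, \varepsilon^{q-p}.
\]
Sending $n \to \infty$ first and then $\varepsilon \to 0$ yields $\|u_n - u\|_q \to 0$.

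The only delicate step is justifying that an initial segment of the Schwarz ordering of length at most $K$ sits inside a fixed finite subset $B(K)$ of $\mathbb{Z}^N$ depending only on $K$. This is immediate from the construction of $R_{\mathbb{Z}^N}$ in \cite{DSR}, which is based on one $n$-independent enumeration of $\mathbb{Z}^N$: the initial segment of any prescribed length is then a single fixed finite set. Beyond this observation the argument uses only elementary $\ell^p$ bookkeeping, Fatou's lemma, and pointwise convergence, so I expect no further obstacle.
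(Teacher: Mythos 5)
The paper does not prove this proposition at all—it is imported verbatim from \cite[Theorem 4.16]{DSR}—so there is no in-paper argument to compare against; judging your proof on its own terms, it is correct. The one genuinely non-routine step, namely that Schwarz symmetry upgrades the Chebyshev bound $\#\{x: u_n(x)\ge t\}\le M^p/t^p$ to the containment of every super-level set in a single fixed initial segment $B(t)$ of the $n$-independent enumeration underlying $R_{\mathbb{Z}^N}$, is handled correctly, and the resulting uniform-in-$n$ tail bound together with the estimate $|u_n(x)|^q\le \varepsilon^{q-p}|u_n(x)|^p$ off $B(\varepsilon)$ (valid precisely because $q>p$) closes the argument; this is the standard route to such compact embeddings and, up to phrasing the tail control as the pointwise decay $u(x_k)\le k^{-1/p}\|u\|_p$ along the Schwarz ordering rather than via level sets, it is the same mechanism used in \cite{DSR}.
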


\begin{proposition}\cite[Proposition 5.1]{DSR}\label{prop5.1}
	Let $f: \mathbb{R}_+ \to \mathbb{R}_+$, $u \in C_0^+(\mathbb{Z}^N)$ be admissible, $u^* = R_{\mathbb{Z}^N} u$. If $f(0) = 0$, then:
	\begin{equation}
		\sum_{x \in \mathbb{Z}^N} f \big( u(x) \big) = \sum_{x \in \mathbb{Z}^N} f \big( u^*(x) \big).
	\end{equation}
\end{proposition}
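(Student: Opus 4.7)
The plan is to exploit the fundamental equimeasurability property of the discrete Schwarz rearrangement, namely that for every $t>0$,
$$
\bigl|\{x\in\mathbb{Z}^N : u(x)>t\}\bigr| \;=\; \bigl|\{x\in\mathbb{Z}^N : u^*(x)>t\}\bigr|,
$$
which is built into the definition of $R_{\mathbb{Z}^N}$. From this single fact the identity for $\sum f(u)$ should follow by a counting argument. Since $u$ is admissible, I would first record that the super-level sets $\{u>t\}$ are finite for every $t>0$, so in particular the set of distinct positive values of $u$ is at most countable and admits only finitely many terms above any positive threshold.

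Next, I would enumerate the values with multiplicity. Using equimeasurability, order the positive values of $u$ as a non-increasing sequence $v_1 \geq v_2 \geq \cdots$ (finite or tending to $0$); each $v_n$ is attained the same number of times by $u$ and by $u^*$ on $\mathbb{Z}^N$, because for any $a>b>0$ the cardinality $|\{u\in(b,a]\}| = |\{u>b\}| - |\{u>a\}|$ depends only on the distribution function, which $u$ and $u^*$ share. Hence the multisets of positive values are identical:
$$
\{\!\{u(x) : x\in\mathrm{supp}(u)\}\!\} \;=\; \{\!\{u^*(x) : x\in\mathrm{supp}(u^*)\}\!\}.
$$

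Then I would write both sides of the desired identity as sums over these common positive values. Using $f(0)=0$ to drop the contribution of the zero set, and invoking the unconditional rearrangement of unordered sums of nonnegative terms (allowed since $f\geq 0$),
$$
\sum_{x\in\mathbb{Z}^N} f(u(x)) \;=\; \sum_{x\in\mathrm{supp}(u)} f(u(x)) \;=\; \sum_{n} f(v_n) \;=\; \sum_{x\in\mathrm{supp}(u^*)} f(u^*(x)) \;=\; \sum_{x\in\mathbb{Z}^N} f(u^*(x)),
$$
with the convention that the common value lies in $[0,+\infty]$, so the identity holds even when both sides diverge.

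The only delicate point is justifying the multiset enumeration when $\mathrm{supp}(u)$ is countably infinite. I would handle this by a truncation: set $u_k := u\,\mathbf{1}_{\{u>1/k\}}$, apply Proposition 5.1-type reasoning to the finitely supported $u_k$ (where the identity is transparent, since the two supports are in bijection preserving values), and then pass to the limit $k\to\infty$ using monotone convergence of the partial sums of the nonnegative series $\sum f(u_k)$ to $\sum f(u)$, and similarly for $u^*$ (noting that $(u_k)^* = (u^*)_k$ by definition of $R_{\mathbb{Z}^N}$). The main obstacle, then, is essentially bookkeeping: making precise that the rearrangement preserves values-with-multiplicities in the possibly-infinite support case, which the equimeasurability plus finiteness of $\{u>t\}$ delivers.
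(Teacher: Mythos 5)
Your argument is correct, but note that the paper itself offers no proof of this statement: it is imported verbatim from \cite[Proposition 5.1]{DSR}, so there is no in-paper argument to compare yours against. What you wrote is the standard equimeasurability proof: admissibility makes every super-level set $\{u>t\}$ finite, so each positive value has a well-defined finite multiplicity $|\{u\geq v\}|-|\{u>v\}|$ determined entirely by the distribution function, which $u$ and $u^*$ share by construction of $R_{\mathbb{Z}^N}$; since $f(0)=0$ kills the zero set and $f\geq 0$ permits unconditional rearrangement of the sum in $[0,+\infty]$, the two sums coincide. Your truncation step is not actually needed for this -- once the multisets of positive values agree, a value-preserving bijection of the supports already gives equality of the nonnegative sums -- and the identity $(u_k)^*=(u^*)_k$ you invoke, while true for level-set truncations, depends on the specific ordering of $\mathbb{Z}^N$ used in \cite{DSR} and would deserve a one-line justification if you kept that route.
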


From \cite[Theorem 5.15]{DSR}, we have
\begin{proposition}\label{thm5.15}
	Let \( u \in \ell^2(\mathbb{Z}^N) \) be non-negative, \( u^* = R_{\mathbb{Z}^N}u \). Then
	\begin{equation}
		\|\nabla u^*\|_2 \leq \|\nabla u\|_2.
	\end{equation}
\end{proposition}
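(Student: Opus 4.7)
The plan is to reduce the statement to a Riesz-type rearrangement inequality for the adjacency quadratic form on $\mathbb{Z}^N$, and then to prove that inequality combinatorially via polarization. Expanding the square in \eqref{tidu} and using that every vertex of $\mathbb{Z}^N$ has degree exactly $2N$, I would derive the identity
\begin{equation*}
\|\nabla u\|_2^2 \;=\; 2N\,\|u\|_2^2 \;-\; \sum_{x\in\mathbb{Z}^N} u(x)\sum_{y\sim x}u(y) \;=:\; 2N\|u\|_2^2 - Q(u).
\end{equation*}
Since Proposition \ref{prop5.1} applied to the map $t\mapsto t^2$ yields $\|u^*\|_2 = \|u\|_2$, the desired bound $\|\nabla u^*\|_2 \le \|\nabla u\|_2$ is equivalent to the Riesz-type inequality $Q(u) \le Q(u^*)$.

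Next, I would linearize $Q$ using the layer-cake representation $u = \int_0^\infty \mathbf{1}_{\{u>t\}}\,dt$. Setting $A_t := \{u>t\}$, Fubini yields
\begin{equation*}
Q(u) = \int_0^\infty\!\!\int_0^\infty e(A_s, A_t)\,ds\,dt, \qquad e(A,B) := \#\{(x,y) : x\sim y,\ x\in A,\ y\in B\}.
\end{equation*}
Because $\{u^*>t\}=R_{\mathbb{Z}^N}A_t$ and $|R_{\mathbb{Z}^N}A_t|=|A_t|$, it suffices to prove the purely combinatorial inequality
\begin{equation*}
e(A,B) \;\le\; e\bigl(R_{\mathbb{Z}^N}A,\, R_{\mathbb{Z}^N}B\bigr)\qquad\text{for all finite } A,B\subset\mathbb{Z}^N,
\end{equation*}
after which Fubini together with a truncation argument extends the conclusion from finitely supported $u$ to general $u\in\ell^2(\mathbb{Z}^N)$.

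To prove the edge-count inequality I would use two-point rearrangement (polarization). Fix a coordinate half-space $H\subset\mathbb{Z}^N$ with reflection $\sigma_H$ (for instance the reflection across $x_1=\tfrac12$), and define the polarized sets $A^H,B^H$ by placing the "heavier" element of each orbit $\{x,\sigma_H x\}$ inside $H$. On each rectangle $\{x,y,\sigma_H x,\sigma_H y\}$ with $\{x,y\}\in E$, a short case analysis resting on the elementary fact $(a-c)(b'-b)\ge 0$ whenever $a\ge c$ and $b'\ge b$ with entries in $\{0,1\}$ shows that $e(A^H,B^H)\ge e(A,B)$, while edges crossing $\partial H$ are preserved verbatim. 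Iterating across a countable family of coordinate half-spaces and invoking the pointwise-closure Proposition \ref{prop4.15} together with the compactness result Proposition \ref{thm4.16}, one produces a limit pair that is symmetric under every $\sigma_H$, and this forces the limit to coincide with $R_{\mathbb{Z}^N}A$ and $R_{\mathbb{Z}^N}B$.

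The main obstacle is this last convergence step. The Schwarz rearrangement on $\mathbb{Z}^N$ is defined in \cite{DSR} through a specific enumeration of vertices (essentially by distance to the origin), so verifying that iterated polarizations converge to that canonical configuration, rather than to some other set sharing the required symmetries, requires a uniqueness argument tied to the construction of $R_{\mathbb{Z}^N}$. An alternative route that sidesteps polarization is to compare $u$ with $u^*$ directly by swapping pairs of values that are "misaligned" with respect to the canonical enumeration and checking that each swap does not decrease $Q$; this is combinatorially more explicit but leans more heavily on the fine structure of $R_{\mathbb{Z}^N}$ described in \cite{DSR}.
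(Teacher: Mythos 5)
First, note that the paper does not actually prove this proposition: it is imported verbatim from \cite[Theorem 5.15]{DSR}, so there is no internal proof to compare against. Your opening reduction is nevertheless the right one and is correct: expanding the square and using that every vertex of $\mathbb{Z}^N$ has degree $2N$ gives $\norm{\nabla u}_2^2=2N\norm{u}_2^2-Q(u)$ with $Q(u)=\sum_{x\sim y}u(x)u(y)$, and since Proposition \ref{prop5.1} preserves $\norm{u}_2$, the claim is equivalent to $Q(u)\le Q(u^*)$.

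The gap lies in the combinatorial core. The two-set inequality $e(A,B)\le e(R_{\mathbb{Z}^N}A,R_{\mathbb{Z}^N}B)$ that you reduce to is \emph{false} for general finite $A,B$: take $A=\{0,2e_1\}$ and $B=\{e_1\}$, so that $e(A,B)=2$, while $R_{\mathbb{Z}^N}B=\{o\}$ is a single point and $R_{\mathbb{Z}^N}A$ is a two-point set, whence $e(R_{\mathbb{Z}^N}A,R_{\mathbb{Z}^N}B)\le 1$. The same example breaks the polarization step: with $H=\{x_1\ge 1\}$ and reflection $\sigma_H$ across $x_1=\tfrac12$ one gets $A^H=\{e_1,2e_1\}$, $B^H=\{e_1\}$ and $e(A^H,B^H)=1<2=e(A,B)$. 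The assertion that ``edges crossing $\partial H$ are preserved verbatim'' holds only on the diagonal $A=B$: for a crossing edge $\{x,\sigma_Hx\}$ the ordered contribution $\mathbf{1}_A(x)\mathbf{1}_B(\sigma_Hx)+\mathbf{1}_A(\sigma_Hx)\mathbf{1}_B(x)$ can drop from $2$ to $0$ when two \emph{different} sets are polarized simultaneously, which is exactly the failure of the Riesz inequality for the adjacency kernel (it vanishes on the diagonal, so it is not symmetric decreasing). The layer-cake argument in fact only requires the nested case $A_s\supseteq A_t$, which you do not isolate; that nested inequality is precisely the nontrivial content of \cite[Theorem 5.15]{DSR} and is tied to the particular vertex ordering used there to define $R_{\mathbb{Z}^N}$. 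What does survive of your argument is that polarization of a \emph{single} function does not increase $\norm{\nabla u}_2$; but then the final step --- that iterated polarizations converge to $R_{\mathbb{Z}^N}u$ rather than to some other configuration invariant under all the reflections --- is exactly the point you flag as open, and Propositions \ref{prop4.15} and \ref{thm4.16} cannot be invoked there, since they apply to sequences that are already Schwarz symmetric, not to polarization iterates. As it stands the proposal does not yield a proof; citing \cite[Theorem 5.15]{DSR}, as the paper does, is the appropriate course.
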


\section{Proof of Theorems \ref{th1} and \ref{th2}}\label{secproof}
We first prove that $I(u)$ is bounded from below on $S_m$ and $E_m \leq 0$ for all $m>0$.
\begin{lemma}\label{lembounded}
Assume that \eqref{f1} holds. Then,  for all $m>0$, $-\infty<E_m\leq 0$.
\end{lemma}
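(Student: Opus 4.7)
The plan is to establish $-\infty < E_m \leq 0$ by verifying the two bounds separately, using only \eqref{f1}.

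For the lower bound $E_m > -\infty$, the key discrete observation is that since $\mu$ is the counting measure, Proposition~\ref{prointer} (with $q=\infty$) gives $\|u\|_\infty \leq \|u\|_2 = \sqrt{m}$ for every $u \in S_m$. Hence each such $u$ is pointwise valued in $[-\sqrt{m},\sqrt{m}]$, which is the feature that replaces scaling on $\mathbb{R}^N$. From \eqref{f1}, integrating $f(s)/s \to 0$ gives $\lim_{t \to 0} F(t)/t^2 = 0$, so there is $\delta \in (0,\sqrt{m}]$ with $|F(t)| \leq t^2$ for $|t|\leq \delta$; by continuity of $F$ on the compact interval $[-\sqrt{m},\sqrt{m}]$, $M := \max_{|t|\leq \sqrt{m}} |F(t)|$ is finite, and for $\delta \leq |t| \leq \sqrt{m}$ one has $|F(t)| \leq M \leq (M/\delta^2)\,t^2$. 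Setting $C_m := \max(1, M/\delta^2)$ thus yields the quadratic envelope $|F(t)| \leq C_m t^2$ on $[-\sqrt{m},\sqrt{m}]$. This gives
$$
\biggl|\int_{\mathbb{Z}^N} F(u)\,d\mu\biggr| \;\leq\; C_m \sum_{x \in \mathbb{Z}^N} u(x)^2 \;=\; C_m m,
$$
and combined with $\|\nabla u\|_2^2 \geq 0$ we conclude $I(u) \geq -C_m m$ uniformly on $S_m$, so $E_m \geq -C_m m > -\infty$.

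For the upper bound $E_m \leq 0$, I would spread the prescribed mass over a large discrete cube. Set $Q_L := \{0,1,\ldots,L-1\}^N$ and $u_L := \sqrt{m/L^N}\,\mathbf{1}_{Q_L}$, so $u_L \in S_m$. Since $u_L$ is constant inside and outside $Q_L$, only edges crossing $\partial Q_L$ contribute to the gradient; counting $2NL^{N-1}$ such boundary edges, each contributing $m/L^N$, one obtains
$$
\|\nabla u_L\|_2^2 \;=\; \sum_{\{x,y\}\text{ boundary edge}} \bigl(u_L(y)-u_L(x)\bigr)^2 \;=\; \frac{2Nm}{L} \;\longrightarrow\; 0.
$$
Moreover $\int F(u_L)\,d\mu = L^N F(\sqrt{m/L^N})$, and since $m/L^N \to 0$ and $F(t)/t^2 \to 0$ (as shown above), this tends to $0$ as well. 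Thus $I(u_L) \to 0$, giving $E_m \leq 0$.

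There is no genuine obstacle; the only points worth highlighting are conceptual. First, the $\ell^2 \hookrightarrow \ell^\infty$ embedding on $\mathbb{Z}^N$ gives a pointwise amplitude cap for functions in $S_m$, which is precisely what turns continuity of $F$ into a uniform quadratic envelope and drives the lower bound. Second, since the Euclidean dilation $u \mapsto \lambda^{N/2} u(\lambda\cdot)$ has no lattice analogue, the role of the vanishing-gradient scaling is instead played by the "flat box" $u_L$, whose gradient energy decays like $L^{-1}$ while its support grows. Both observations follow directly from \eqref{f1} and elementary combinatorics on $\mathbb{Z}^N$.
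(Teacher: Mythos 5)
Your proposal is correct and follows essentially the same route as the paper: the $\ell^2\hookrightarrow\ell^\infty$ embedding plus the quadratic envelope $|F(t)|\leq C_m t^2$ on $[-\sqrt{m},\sqrt{m}]$ for the lower bound, and a flat indicator-type test function spread over a large cube (whose gradient energy decays like $L^{-1}$ and whose $F$-integral is $o(1)$ since $F(t)/t^2\to 0$) for the upper bound. The only difference is that you spell out the compactness argument behind the quadratic envelope, which the paper leaves implicit.
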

\begin{proof}
We first prove that $E_m > -\infty $ for all $m>0$. Since $\|u\|_{\ell^{2}(\mathbb{Z}^{N})}=m$, by Proposition \ref{prointer}, we have $\|u\|_{\infty}\le \sqrt{m}$. By \eqref{f1}, there exists a constant $C>0$ such that
\begin{equation*}
F(t)\le Ct^{2}, \quad \forall |t|\le \sqrt{m}.
\end{equation*}
Hence, for any $u\in S_{m}$,
$$\int_{\mathbb{Z}^N} F(u)\, d\mu\leq Cm,$$
and thus,
$$
I(u) = \frac{1}{2} \int_{\mathbb{Z}^N}|\nabla u|^2 \,d\mu-\int_{\mathbb{Z}^N} F(u)\,d\mu\geq-\int_{\mathbb{Z}^N} F(u)\,d\mu \geq -Cm,
$$
from where it follows that $E_m \geq -Cm$.

Now, we prove that $E_m \leq 0$ for all $m >0$. Let
\[
    w_n(x):=
     \begin{cases}
       \sqrt{m}n^{-\frac{N}{2}},  & x \in \left[-\frac{n}{2}, \frac{n}{2}\right)^N \cap \mathbb{Z}^N,\\
       0, & else.
      \end{cases}
\]
Then $\int_{\mathbb{Z}^N}\abs{w_n}^{2} = m$ and $\norm{w_n}_\infty = \sqrt{m}n^{-\frac{N}{2}}$. By \eqref{f1}, for any $\varepsilon>0$, there exists $t_\varepsilon>0$ such that
$$
\abs{F(t)} \leq \varepsilon t^2, \quad \forall \abs{t}< t_\varepsilon.
$$
Then, for $n \geq 1$ large enough,
\[
    I(w_n) \leq \frac{m}{2}(2Nn^{N-1}n^{-N}) + m\varepsilon= Nmn^{-1} + m\varepsilon.
\]
Thus $E_m \leq 0$.
\end{proof}
Now, we provide the the proofs of statements (i) and (ii) of Theorem \ref{th1}.
\begin{lemma}\label{lemnonin0}
Assume that \eqref{f1} holds. Then, the following subadditivity inequality holds
   \begin{equation}\label{eqsa}
    E_{a+b}\leq E_a+E_b, \quad \forall a,b>0,
\end{equation}
and the mapping $m \mapsto E_m$ is non-increasing.
\end{lemma}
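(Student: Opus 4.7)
The plan is to establish the subadditivity inequality $E_{a+b} \leq E_a + E_b$ by building a competitor in $S_{a+b}$ from near-minimizers of $E_a$ and $E_b$ placed with disjoint supports on the lattice. The monotonicity then drops out immediately: since $E_b \leq 0$ for all $b>0$ by Lemma \ref{lembounded}, subadditivity yields $E_{a+b} \leq E_a + E_b \leq E_a$, so $m \mapsto E_m$ is non-increasing.

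For subadditivity, fix $\varepsilon > 0$ and pick $u \in S_a$, $v \in S_b$ with $I(u) \leq E_a + \varepsilon$ and $I(v) \leq E_b + \varepsilon$. I would first reduce to the case where $u$ and $v$ are finitely supported. Given any $w \in S_m$ and large $M$, define the truncation $w_M := w \cdot \mathbf{1}_{B_M}$ and the normalized truncation $\tilde w_M := (\sqrt{m}/\|w_M\|_2)\, w_M \in S_m \cap C_c(\mathbb{Z}^N)$. The pointwise inequality $(w_M(y)-w_M(x))^2 \leq 2(w(y)^2 + w(x)^2)$ dominates the gradient integrand of $\tilde w_M$ by a fixed $\ell^1$ function; meanwhile, since $\|w\|_\infty \leq \sqrt{m}$, the quadratic bound $|F(t)| \leq C t^2$ for $|t| \leq 2\sqrt{m}$ furnished by \eqref{f1} dominates $|F(\tilde w_M)|$ by $C w^2$. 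Pointwise convergence $\tilde w_M \to w$ combined with dominated convergence then gives $I(\tilde w_M) \to I(w)$, so after enlarging $\varepsilon$ by a harmless factor we may assume $u, v \in C_c(\mathbb{Z}^N)$.

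Next, choose $R$ large enough that $\operatorname{supp}(u) \cup \operatorname{supp}(v) \subset B_R$, and a translation vector $\tau \in \mathbb{Z}^N$ with $|\tau| > 2R+2$. Setting $v_\tau(x) := v(x-\tau)$, the supports of $u$ and $v_\tau$ and all their graph neighborhoods are disjoint, so that for every vertex $x$ and every edge $\{x,y\}$ at most one of $u, v_\tau$ is nonzero. Together with $F(0)=0$ and translation invariance of the graph structure, this yields
\begin{align*}
\|u + v_\tau\|_2^2 &= \|u\|_2^2 + \|v\|_2^2 = a + b,\\
\|\nabla(u + v_\tau)\|_2^2 &= \|\nabla u\|_2^2 + \|\nabla v\|_2^2,\\
\int_{\mathbb{Z}^N} F(u + v_\tau)\, d\mu &= \int_{\mathbb{Z}^N} F(u)\, d\mu + \int_{\mathbb{Z}^N} F(v)\, d\mu.
\end{align*}
Hence $u + v_\tau \in S_{a+b}$ and $I(u + v_\tau) = I(u) + I(v) \leq E_a + E_b + 2\varepsilon$, so $E_{a+b} \leq E_a + E_b + 2\varepsilon$. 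Letting $\varepsilon \to 0$ concludes the proof.

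The main technical obstacle is the approximation step: although $C_c(\mathbb{Z}^N)$ is dense in $H^1(\mathbb{Z}^N)$, density alone does not produce approximants on the constraint surface $S_m$, which forces the normalization and the dominated-convergence bookkeeping on each term of $I$. Once both $u$ and $v$ have finite support, the translation argument on the lattice gives exact (not merely asymptotic) energy additivity, a feature noticeably cleaner than the continuum analogue.
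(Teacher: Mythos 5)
Your proof is correct and follows the same overall strategy as the paper: truncate near-minimizers to compact support, translate them to disjoint (non-adjacent) supports, and add, using $F(0)=0$ and the edge-wise splitting of the discrete gradient; monotonicity then follows from $E_b\leq 0$. The one genuine difference is how the mass lost in truncation is recovered: you rescale each truncated function back onto its own sphere $S_a$, $S_b$ (which forces the dominated-convergence check that $I(\tilde w_M)\to I(w)$, and you carry this out correctly), whereas the paper leaves the truncations un-normalized and instead glues in a third, widely spread ``flat bump'' $w_{k}$ carrying the residual mass $a+b-\|\tilde u^a_n\|_2^2-\|\tilde u^b_n\|_2^2$ with energy $<\varepsilon/3$, as in Lemma \ref{lembounded}. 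Both devices are valid; yours avoids the auxiliary piece at the cost of controlling the rescaling, the paper's avoids rescaling at the cost of a third disjointly supported block.
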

\begin{proof}
    We first prove the subadditivity inequality \eqref{eqsa}.
Let $\{u^a_n\}$ and $\{u^b_n\}$ be the minimizing sequences of minimization problems $E_a$ and $E_b$, respectively. Then, for any $\varepsilon > 0$, there exists $N_\varepsilon \in \mathbb{N}^+$ such that, for all $n \geq N_\varepsilon$,
$$
I(u_n^a)<E_a+\frac{\varepsilon}{6} \quad \text{and} \quad I(u_n^b)<E_b+\frac{\varepsilon}{6}.
$$
For $R>0$, set
$$
\eta_R(x):=      \begin{cases}
       1,  & x \in \left[-\frac{R}{2}, \frac{R}{2}\right)^N \cap \mathbb{Z}^N,\\
       0, & else.
      \end{cases}
$$
 Fix $\varepsilon > 0$ and $n \geq N_\varepsilon$. Then, by the Lebesgue  Dominated convergence theorem,  we have $\eta_Ru^a_n \to u^a_n$ and $\eta_Ru^b_n \to u^a_n$ as $R \to +\infty$ in $\ell^2(\mathbb{Z}^N)$. Thus, by Proposition \ref{lemequiv}, there exists $R_1 > 0$ such that $\tilde{u}_n^a:= \eta_{R_1}u^a_n \in C_c(\mathbb{Z}^N)$ and $\tilde{u}_n^b:=\eta_{R_1}u_n^b \in C_c(\mathbb{Z}^N)$ satisfying $\norm{\tilde{u}_n^a}^2_2+ \norm{\tilde{u}_n^b}^2_2\leq a+b$ and
\begin{equation}\label{eqnondab}
    I(\tilde{u}_n^a)<E_a+\frac{\varepsilon}{3} \quad \text{and} \quad I(\tilde{u}_n^b)<E_b+\frac{\varepsilon}{3}.
\end{equation}
For $k \in \mathbb{N}^+$, set
\[
    w_k(x):=
     \begin{cases}
       k^{-\frac{N}{2}}\sqrt{a+b-\norm{\tilde{u}_n^a}^2_2- \norm{\tilde{u}_n^b}^2_2},  & x \in \left[-\frac{k}{2}, \frac{k}{2}\right)^N \cap \mathbb{Z}^N,\\
       0, & else.
      \end{cases}
\]
Repeating the argument in Lemma \ref{lembounded}, we know, there exists $k_1\in \mathbb{N}^+$ large enough, such that
\begin{equation}\label{eqnondw}
I(w_{k_1}) < \frac{\varepsilon}{3}.
\end{equation}
Let $e_1:=(1,0,\cdots,0)^T \in \mathbb{Z}^N$. Then,  for $K \in \mathbb{N}^+$ and $K$ sufficiently large, we conclude that $\tilde{u}_{n,K}^a:=\tilde{u}_n^a(x+Ke_1)$, $\tilde{u}_{n,K}^b:=\tilde{u}_n^b(x-Ke_1)$ satisfying $\operatorname{supp}(\tilde{u}_{n,K}^a) \cap \operatorname{supp}(\tilde{u}_{n,K}^b)= \emptyset$, $\operatorname{supp}(\tilde{u}_{n,K}^a) \cap \operatorname{supp}(w_{k_1})= \emptyset$, $\operatorname{supp}(\tilde{u}_{n,K}^b) \cap \operatorname{supp}(w_{k_1})= \emptyset$, and
$$
\int_{\mathbb{Z}^{N}}|\nabla (\tilde{u}_{n,K}^a + \tilde{u}_{n,K}^b +w_{k_1})|^{2}\,d\mu = \int_{\mathbb{Z}^{N}}|\nabla \tilde{u}_{n,K}^a|^{2}\,d\mu+\int_{\mathbb{Z}^{N}}|\nabla \tilde{u}_{n,K}^b |^{2}\,d\mu+\int_{\mathbb{Z}^{N}}|\nabla w_{k_1}|^{2}\,d\mu.
$$
Hence, $\tilde{u}_{n,K}^a + \tilde{u}_{n,K}^b +w_{k_1} \in S_{a+b}$, and thus, by \eqref{eqnondab} and \eqref{eqnondw},
$$
E_{a+b} \leq I(\tilde{u}_{n,K}^a + \tilde{u}_{n,K}^b +w_{k_1}) = I(\tilde{u}_n^a) + I(\tilde{u}_n^b) + I(w_{k_1})< E_a + E_b +\varepsilon.
$$
This proves \eqref{eqsa}.

Now, we prove the mapping $m \mapsto E_m$ is non-increasing. By \eqref{eqsa} and Lemma \ref{lembounded}, for any $m_2>m_1>0$, we have
$$
 E_{m_1}-E_{m_2}\geq- E_{m_2-m_1}\geq 0.
$$
Thus, the mapping $m \mapsto E_m$ is non-increasing.
\end{proof}
\begin{lemma}\label{lemnonin}
Assume that \eqref{f1} holds. Then, the mapping $m \mapsto E_m$ is continuous, and  there exists a uniquely determined number $m^* \in[0, +\infty]$ such that
$$
E_m=0 \text { if } 0<m \leq m^*( 0<m<+\infty\text{ when } m^*=+\infty), \quad \text{ and }\quad E_m<0 \text { if } m>m^*.
$$
\end{lemma}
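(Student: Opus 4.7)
The plan is to prove continuity of $m\mapsto E_m$ on $(0,+\infty)$ and then extract $m^*$ as the supremum of the zero set, the key ingredients being Lemma \ref{lembounded} and Lemma \ref{lemnonin0} together with the auxiliary fact $\lim_{m\to 0^+}E_m=0$. For this auxiliary fact, given $\varepsilon>0$ pick $\delta>0$ with $|F(t)|\leq \varepsilon t^2$ for $|t|\leq \delta$ (from \eqref{f1}); then for $0<m\leq \delta^2$, any $u\in S_m$ satisfies $\|u\|_\infty\leq \sqrt{m}\leq \delta$ by Proposition \ref{prointer}, whence $I(u)\geq -\varepsilon m$ and, combined with $E_m\leq 0$ from Lemma \ref{lembounded}, $|E_m|\leq \varepsilon m$.

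For continuity at a fixed $m_0>0$ along any sequence $m_n\to m_0$, one direction ($\limsup E_{m_n}\leq E_{m_0}$ when $m_n\geq m_0$, or $\liminf E_{m_n}\geq E_{m_0}$ when $m_n\leq m_0$) comes for free from the monotonicity in Lemma \ref{lemnonin0}. For the reverse direction I would use a rescaling device: pick a near-minimizer $v_n\in S_{m_n}$ with $I(v_n)\leq E_{m_n}+1/n$ and set $\tilde v_n:=\alpha_n v_n\in S_{m_0}$ with $\alpha_n:=\sqrt{m_0/m_n}\to 1$. The bound $|F(t)|\leq Ct^2$ for $|t|\leq \sqrt{m_0+1}$ (from \eqref{f1}) forces $\|\nabla v_n\|_2^2$ to be uniformly bounded. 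Then
\begin{equation*}
I(\tilde v_n)-I(v_n)=\frac{\alpha_n^2-1}{2}\|\nabla v_n\|_2^2+\int_{\mathbb{Z}^N}\bigl(F(v_n)-F(\alpha_n v_n)\bigr)\,d\mu.
\end{equation*}
The first term tends to $0$; for the second, split $\mathbb{Z}^N$ into $A_n:=\{x:|v_n(x)|>\delta\}$ and its complement. On $A_n^c$, a mean-value estimate together with \eqref{f1} yields $|F(v_n)-F(\alpha_n v_n)|\leq \varepsilon|1-\alpha_n|\,v_n^2$, so its sum is at most $\varepsilon|1-\alpha_n|m_n\to 0$. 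The set $A_n$ has cardinality at most $m_n/\delta^2$, uniformly bounded, and on it the crude estimate $|F(v_n)-F(\alpha_n v_n)|\leq (1-\alpha_n)\sqrt{m_0+1}\max_{|s|\leq \sqrt{m_0+1}}|f(s)|$ again yields a vanishing sum. Hence $E_{m_0}\leq I(\tilde v_n)\leq E_{m_n}+1/n+o(1)$. Swapping the roles of $m_0$ and $m_n$ (rescaling a near-minimizer of $E_{m_0}$ into $S_{m_n}$) delivers the reverse inequality, so $E_{m_n}\to E_{m_0}$.

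Once continuity is in place, set $m^*:=\sup\{m>0:E_m=0\}$ with $\sup\emptyset:=0$. By Lemma \ref{lemnonin0}, the zero set is downward-closed, and continuity forces it to contain its finite right endpoint; hence it equals $\emptyset$, $(0,m^*]$, or $(0,+\infty)$ according as $m^*$ equals $0$, lies in $(0,+\infty)$, or equals $+\infty$. For $m>m^*$, $E_m\neq 0$ by the definition of supremum and $E_m\leq 0$ by Lemma \ref{lembounded}, so $E_m<0$; uniqueness is immediate from this dichotomy. The main obstacle is the rescaling step for continuity, specifically controlling $\int_{\mathbb{Z}^N}[F(v_n)-F(\alpha_n v_n)]\,d\mu$ when $v_n$ varies with $n$: dominated convergence is not directly applicable, and one must split $\mathbb{Z}^N$ into a small-value region handled by \eqref{f1} and a large-value region whose cardinality is uniformly bounded via the $\ell^2$ constraint.
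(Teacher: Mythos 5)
Your argument is correct and follows essentially the same route as the paper: continuity is obtained by rescaling near-minimizers between nearby mass levels and controlling the energy change via the uniform $\ell^\infty$ bound from Proposition \ref{prointer}, assumption \eqref{f1}, and the $H^1$--$\ell^2$ equivalence, after which $m^*$ is extracted from monotonicity, continuity, and $E_m\leq 0$. The only cosmetic difference is that the paper applies the mean value theorem to $s\mapsto I(su)$ to get the uniform bound $|I(u)-I(\tau u)|\leq C|1-\tau|$ on $S_{m}$, whereas you expand the difference and estimate the gradient and nonlinear terms separately (your small/large-value splitting is not even needed, since $|f(t)|\leq C|t|$ on the whole range $|t|\leq\sqrt{m_0+1}$ already controls the nonlinear term).
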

\begin{proof}
We first prove that the mapping $m \mapsto E_m$ is right-continuous.  Define
$$
D_{m}:=\{u \in H^1(\mathbb{Z}^N): \norm{u}^2_2\leq m\}.
$$
Fix $m>0$.
By \eqref{f1}, there exists $C>0$ such that
\begin{equation*}
\abs{f(t)}\le C\abs{t}, \quad \forall |t|\le \sqrt{2m}.
\end{equation*}
By Proposition \ref{prointer},  for any $u \in D_{2m}$, we have $\norm{u}_\infty \leq \sqrt{2m}$.
For any $u \in S_{m+\delta}$ with $\delta\in (0,\frac{m}{2})$, we have $u \in D_{2m}$, and thus, by the mean value theorem, there exists $\tau \in (\frac{\sqrt{m}}{\sqrt{m+\delta}},1)$ such that
$$
\begin{aligned}
    \left|I(u)-I\left(\frac{\sqrt{m}}{\sqrt{m+\delta}}u\right)\right| &= \left|\left\langle I'(\tau u), \left(1-\frac{\sqrt{m}}{\sqrt{m+\delta}}\right)u\right\rangle\right|\\
    & = \left(1-\frac{\sqrt{m}}{\sqrt{m+\delta}}\right)\left|\tau \int_{\mathbb{Z}^N}\abs{\nabla u}^2\, d\mu - \int_{\mathbb{Z}^N}f(\tau u)u\, d\mu\right|\\
    & \leq \tau\left(1-\frac{\sqrt{m}}{\sqrt{m+\delta}}\right)\left(\int_{\mathbb{Z}^N}\abs{\nabla u}^2\, d\mu +C\int_{\mathbb{Z}^N}\abs{u}^2\, d\mu\right)
\end{aligned}
$$
By Proposition \ref{lemequiv}, we conclude that, there exists $C_1>0$ such that, for all $u \in S_{m+\delta}$ with $\delta\in (0,\frac{m}{2})$,
\begin{equation}\label{eqiumu}
    \left|I(u)-I\left(\frac{\sqrt{m}}{\sqrt{m+\delta}}u\right)\right| \leq C_1\left(1-\frac{\sqrt{m}}{\sqrt{m+\delta}}\right).
\end{equation}
Let $\{u_n^{m+\delta}\}$ be the minimizing sequence of problem $E_{m+\delta}$. By Lemma \ref{lemnonin0}, we have
$$
\begin{aligned}
    E_{m + \delta}\leq E_m &\leq I\left(\frac{\sqrt{m}}{\sqrt{m+\delta}}u_n^{m+\delta}\right)\\
        & \leq I\left(u_n^{m+\delta}\right) + \left|I(u)-I(\frac{\sqrt{m}}{\sqrt{m+\delta}}u)\right|\\
        & \leq I\left(u_n^{m+\delta}\right) + C_1\left(1-\frac{\sqrt{m}}{\sqrt{m+\delta}}\right).
\end{aligned}
$$
Let $n \to +\infty$, we obtain
$$
    E_{m + \delta}\leq E_m \leq E_{m + \delta} + C_1\left(1-\frac{\sqrt{m}}{\sqrt{m+\delta}}\right),
$$
which implies $E_{m+\delta} \to E_m$ as $\delta \to 0^+$. Since $m>0$ is arbitrary, we conclude that the mapping $m \mapsto E_m$ is right-continuous.

Now, we prove that the mapping $m \mapsto E_m$ is left-continuous. Fix $m>0$. Repeating the arguemnt of \eqref{eqiumu}, we conclude that, there exists $C_2>0$ such that, for all $u\in S_m$ and $\delta \in (0,\frac{m}{2})$,
$$
 \left|I\left(\frac{\sqrt{m-\delta}}{\sqrt{m}}u\right)-I(u)\right| \leq C_2\left(1-\frac{\sqrt{m-\delta}}{\sqrt{m}}\right).
$$
Let $\{u_n\}$ be the minimizing sequence of problem $E_{m}$. Then, by Lemma \ref{lemnonin0}, for all $\delta\in (0,\frac{m}{2})$,
$$
\begin{aligned}
    E_{m} \le E_{m-\delta}& \leq I\left(\frac{\sqrt{m-\delta}}{\sqrt{m}}u_n\right)\\
    &\leq  I(u_n) + \left|I\left(\frac{\sqrt{m-\delta}}{\sqrt{m}}u_n\right) -I(u_n)\right|\\
    &\leq I(u_n) + C_2\left(1-\frac{\sqrt{m-\delta}}{\sqrt{m}}\right).
\end{aligned}
$$
Let $n \to +\infty$, we obtain
$$
    E_{m} \le E_{m-\delta} \leq E_{m}  + C_2\left(1-\frac{\sqrt{m-\delta}}{\sqrt{m}}\right),
$$ which implies $E_{m-\delta} \to E_m$ as $\delta \to 0^+$. Since $m>0$ is arbitrary, we conclude that the mapping $m \mapsto E_m$ is left-continuous.  Thus, the mapping $m \mapsto E_m$ is continuous.

Finally, if there exists $m_0$ such that $E_{m_0}<0$, then, since the mapping $m \mapsto E_m$ is non-increasing and continuous, by Lemma \ref{lembounded}, there exists a uniquely determined number $m^* \in[0, m_0)$ such that
$$
E_m=0 \text { if } 0<m \leq m^*, \quad \text{ and }\quad E_m<0 \text { if } m>m^*.
$$
Otherwise, by Lemma \ref{lembounded},  for all $m>0$, $E_m=0$.
This completes the proof of Lemma \ref{lemnonin}.
\end{proof}

\begin{lemma}\label{lemachieved}
Assume that \eqref{f1}-\eqref{f3} hold. If $E_m<0$, then the infimum $E_m$ is achieved by some positive Schwarz symmetric function $u \in S_m$.
\end{lemma}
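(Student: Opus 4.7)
The plan is to run the direct method on a Schwarz-symmetrized minimizing sequence, and then to rule out any loss of mass via a pointwise rescaling that exploits \eqref{f2}, thereby avoiding the strict subadditivity inequality of \cite{Lions}.

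Given a minimizing sequence $\{u_n\}\subset S_m$ for $E_m$, I would first replace $u_n$ by $|u_n|$ using \eqref{f3} and the pointwise inequality $||u_n(y)|-|u_n(x)||\le|u_n(y)-u_n(x)|$, and then apply the discrete Schwarz rearrangement $u_n^{*}:=R_{\mathbb{Z}^N}|u_n|$. Propositions \ref{prop5.1} and \ref{thm5.15} yield $u_n^{*}\in S_m$, $\int_{\mathbb{Z}^N}F(u_n^{*})\,d\mu=\int_{\mathbb{Z}^N}F(|u_n|)\,d\mu$, and $\|\nabla u_n^{*}\|_2\le\|\nabla u_n\|_2$, so $\{u_n^{*}\}$ is still minimizing and each $u_n^{*}$ is non-negative and Schwarz symmetric. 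The estimate $\int_{\mathbb{Z}^N}F(u_n^{*})\,d\mu\le Cm$ from Lemma \ref{lembounded} then makes $\{u_n^{*}\}$ bounded in $H^1(\mathbb{Z}^N)$. Using $\|u_n^{*}\|_\infty\le\sqrt m$ from Proposition \ref{prointer} and a diagonal extraction, I pass to a subsequence with $u_n^{*}\to u$ pointwise for some $u\ge 0$; Proposition \ref{prop4.15} gives that $u$ is Schwarz symmetric, and Proposition \ref{thm4.16} upgrades the convergence to $u_n^{*}\to u$ in $\ell^q(\mathbb{Z}^N)$ for every $q>2$. Exploiting the standard consequence of \eqref{f1}, $|F(t)|\le\varepsilon t^2+C_\varepsilon|t|^q$ on $[-\sqrt m,\sqrt m]$ for any $\varepsilon>0$ and some fixed $q>2$, together with a Brezis--Lieb type splitting, I obtain $\int_{\mathbb{Z}^N}F(u_n^{*})\,d\mu\to\int_{\mathbb{Z}^N}F(u)\,d\mu$. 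Fatou's lemma applied to $|\nabla u_n^{*}|^2$ then yields $I(u)\le\liminf_n I(u_n^{*})=E_m$.

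The main obstacle is to rule out loss of mass. Setting $m':=\|u\|_2^2\in[0,m]$, the case $m'=0$ is excluded because $u_n^{*}\to 0$ in every $\ell^q$ with $q>2$, whence the above estimate forces $\int_{\mathbb{Z}^N}F(u_n^{*})\,d\mu\to 0$ and $E_m\ge 0$, contradicting $E_m<0$. When $0<m'<m$, the monotonicity of $m\mapsto E_m$ from Lemma \ref{lemnonin0} together with $I(u)\ge E_{m'}$ forces $E_m=E_{m'}=I(u)<0$. Here \eqref{f2} becomes decisive: for $s:=\sqrt{m/m'}>1$, the monotonicity of $F(t)/t^2$ on $(0,+\infty)$ together with $u\ge 0$ gives the pointwise inequality $F(su(x))\ge s^2 F(u(x))$, so that
\[
I(su)\le s^2 I(u)<I(u)=E_m,
\]
the strict inequality coming from $I(u)<0$ and $s^2>1$. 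Since $su\in S_m$, this contradicts the definition of $E_m$. Therefore $\|u\|_2^2=m$ and $I(u)=E_m$.

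Finally, positivity of the minimizer follows from the Lagrange multiplier equation $-\Delta u=f(u)+\lambda u$ on $\mathbb{Z}^N$ together with a discrete strong maximum principle. If $u(x_0)=0$ for some $x_0$, then $u\ge 0$ and $f(0)=0$ (from \eqref{f1}) yield
\[
-\sum_{y\sim x_0}u(y)=-\Delta u(x_0)=f(u(x_0))+\lambda u(x_0)=0,
\]
so $u$ vanishes on every neighbour of $x_0$; iterating along the connectedness of $\mathbb{Z}^N$ forces $u\equiv 0$, contradicting $\|u\|_2^2=m>0$. Hence $u>0$ on $\mathbb{Z}^N$.
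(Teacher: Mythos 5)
Your proposal is correct and follows essentially the same route as the paper's proof: symmetrize the minimizing sequence via \eqref{f3} and the discrete Schwarz rearrangement, use the compact embedding of Schwarz symmetric functions plus a uniform-integrability argument for $\int F(u_n)\,d\mu$ and Fatou for the gradient, recover the full mass by the rescaling $u\mapsto \sqrt{m/\|u\|_2^2}\,u$ together with \eqref{f2} and $E_m<0$, and obtain positivity from the Euler--Lagrange equation and connectedness of $\mathbb{Z}^N$. The only cosmetic differences are that you use a growth-splitting/Brezis--Lieb style argument where the paper invokes Vitali's theorem, and you treat $\|u\|_2^2=0$ as a separate case where the paper simply notes $u\neq 0$ from $I(u)\le E_m<0$.
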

\begin{proof}
Let $\{u_{n}\}$ be a minimizing sequence for $E_m$. Then $\{u_{n}\}\subset S_{m}$ and
$$I(u_{n})\to E_m\quad\text{as $n\to \infty$.}$$
It is clear that $|u_{n}|\in S_{m}$. By definition (\ref{tidu}), we have
\begin{equation}\label{abs}
		\int_{\mathbb{Z}^N}|\nabla |u_{n}||^{2}d\mu = \frac{1}{2}\sum_{x \in \mathbb{Z}^N} \sum_{y \sim x}(|u_{n}(y)|-|u_{n}(x)|)^2\le \frac{1}{2} \sum_{x \in \mathbb{Z}^N}\sum_{y \sim x}(u_{n}(y)-u_{n}(x))^2=\int_{\mathbb{Z}^N}|\nabla u_{n}|^{2}d\mu.
	\end{equation}
Thus, by Proposition \ref{prop5.1} and \eqref{f3}, we conclude
$$ E_m\le I(|u_{n}|) \leq I(u_{n})\to E_m.$$
	Therefore, without loss of generality, we can assume that the minimizing sequence $\{u_{n}\}$ for $E_{m}$ always be nonnegative. On the other hand, by the rearrangement inequalities in Proposition \ref{prop5.1} and Proposition \ref{thm5.15}, we have
	\begin{align*}
		\int_{\mathbb{Z}^N}|\nabla u^*|^2d\mu & \leq \int_{\mathbb{Z}^N}|\nabla u|^2d\mu ,
		\\\int_{\mathbb{Z}^N}F(u^{*})d\mu  &=\int_{\mathbb{Z}^N}F(u) d\mu.
	\end{align*}
Hence
	 $u_{n}^{*}\in S_{m}$ and
	$$ E_m\le I(u_{n}^{*}) \leq I(u_{n})\to E_m.$$
	Therefore without loss of generality, we can assume that the minimizing sequence $\{u_{n}\}$ for $E_{m}$ always be Schwarz symmetric.

Let $\{u_n\}$ be a Schwarz symmetric minimizing sequence. Then, by Proposition \ref{prointer}, $\norm{u_n}_\infty\leq \norm{u_n}_2=\sqrt{m}$, and thus, by Bolzano-Weierstrass theorem and diagonal principle, there exists $u \in H^1(\mathbb{Z}^N)$ such that $u_n(x) \to u(x)$ pointwisely in $\mathbb{Z}^N$, up to a subsequence. Thus $F(u_n(x)) \to F(u(x))$ pointwisely in $\mathbb{Z}^N$.
Fix $p>2$. By \eqref{f1}, for any $\varepsilon>0$, we have
\begin{equation}\label{eqfepsilon}
    \abs{f(t)} \leq \frac{\varepsilon}{m} \abs{t} +C_{\varepsilon,p}\abs{t}^{p-1} \quad  \forall t\in [-\sqrt{m},\sqrt{m}],
\end{equation}
where $C_{\varepsilon,p}>0$ is a positive constant.
By Proposition \ref{thm4.16}, we obtain
$$
\lim_{n \to \infty} \int_{\mathbb{Z}^N} \abs{u_n}^p \, d\mu = \int_{\mathbb{Z}^N}  \abs{u}^p\, d\mu.
$$
Then, by Vitali convergence theorem \cite[Corollary 4.5.5]{mt}, we conclude that, for every $\varepsilon > 0$, there exists a finite set $X_\varepsilon \subset \mathbb{Z}^N$ such that
$$
\sup_{n\in \mathbb{N}^+}\int_{\mathbb{Z}^N \backslash X_\varepsilon}\abs{u_n}^p\, d\mu \leq \frac{\varepsilon p}{2C_{\varepsilon,p}} .
$$
By \eqref{eqfepsilon},
$$
\sup_{n\in \mathbb{N}^+}\int_{\mathbb{Z}^N \backslash X_\varepsilon} \abs{F(u_n)}\, d\mu \leq \frac{\varepsilon}{2m}\int_{\mathbb{Z}^N \backslash X_\varepsilon}\abs{u_n}^2\, d\mu + \frac{C_{\varepsilon,p}}{p}\int_{\mathbb{Z}^N \backslash X_\varepsilon}\abs{u_n}^p\, d\mu \leq \frac{\varepsilon}{2} + \frac{\varepsilon}{2}=\varepsilon.
$$
Thus, by Vitali convergence theorem \cite[Corollary 4.5.5]{mt}, we obtain
$$
\lim_{n \to \infty} \int_{\mathbb{Z}^N} F( u_n) \, d\mu= \int_{\mathbb{Z}^N} F(u)\, d\mu.
$$
Since $u_n(x) \to u(x)$ pointwisely in $\mathbb{Z}^N$, we have $\abs{\nabla u_n}(x) \to \abs{\nabla u}(x)$ pointwisely in $\mathbb{Z}^N$. By Fatou's lemma, we obtain
$$
\varliminf_{n\to\infty}\int_{\mathbb{Z}^N}|\nabla u_n|^2 \,d\mu \geq \int_{\mathbb{Z}^N}|\nabla u|^2 \,d\mu.
$$
Hence
$$E_m= \lim_{n \to \infty}I(u_{n})\geq I(u).$$

By the weak lower semi-continuity of the norm $\ell^2$, we have
$$
\int_{ \mathbb{Z}^N} u^2(x)\,d\mu \leq m.
$$
Since $E_{m} < 0$ and $F(0) = 0$, it follows that $u \neq 0$. Set $t = \frac{m}{\|u\|_{2}^{2}}$, then $t \geq 1$. On the other hand, by \eqref{f2} and the strict negativity of $E_{m}$, we observe that
$$
E_{m} \leq I(tu) \leq t^2 I(u) \leq t^2 E_{m}
$$
which implies $t \leq 1$. Therefore, we conclude that $t=1$, i.e. $\|u\|_{2}^{2}=m$, and hence $E_{m}$ is achieved at $u\ge 0$. Consequently, there exists a Lagrange multiplier $\lambda$ such that
	\begin{align*}
		I'(u)=\lambda G'(u ),
        \end{align*}
where $G(u)=\int_{\mathbb{Z}^{N}}u^{2}\,d\mu$. Thus, for any $ \phi \in C_{c}(\mathbb{Z}^{N})$, one has
$$\int_{\mathbb{Z}^N} (\Gamma(u, \phi) - f(u) \phi) \,d\mu = \lambda\int_{\mathbb{Z}^N} u \phi \,d\mu.
$$
Integration by parts yields
$$\int_{\mathbb{Z}^{N}} (-\Delta u - f(u)) \phi \,d\mu = \lambda\int_{\mathbb{Z}^{N}} u\phi \,d\mu.
$$
For any given \( y \in \mathbb{Z}^{N} \), choose the test function $\phi = \delta_y$, where \( \delta_y(x)=\begin{cases}
       1,  &x=y,\\
       0, &x \neq y,
      \end{cases} \)

we obtain
$$
-\Delta u(y) = \lambda u(y) + f(u(y)).
$$
Since $ y \in \mathbb{Z}^{N}$ is arbitrarily, we have $u$ is a pointwise nonnegative solution to equation \eqref{eqpm}. Moreover, by Proposition \ref{prop4.15}, we conclude that $u$ is Schwarz symmetric.

Finally, we prove that the $u>0$. If $u(x_0)=0$ for some fixed $x_0 \in \mathbb{Z}^N$, then equation \eqref{eqpm} yields that $u(x) = 0$ for all $x \sim x_0$. Since $\mathbb{Z}^N$ is connected, we obtain $u(x) \equiv 0$, which leads to a contradiction.
\end{proof}
Finally, we proceed to prove the statements (iii)-(v) of Theorem \ref{th1}.
\begin{lemma}\label{lemem}
  Assume that \eqref{f1} holds. Let $m^*$ be defined as in Lemma \ref{lemnonin}. Then the following statements hold.
  \begin{enumerate}[label=(\roman*)]
  \item If \eqref{f4} holds, then $E_{\zeta^2}<0$ and $m^*\in [0,\zeta^2)$.
  \item If \eqref{f5} holds, then $m^*\in (0,+\infty]$.
  \item If \eqref{f6} holds, then for all $0<m<m^*$, $E_m$ is not achieved.
  \end{enumerate}
\end{lemma}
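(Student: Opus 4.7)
The plan is to handle the three parts in order: part (i) with an explicit spike test function, part (ii) via a discrete Gagliardo--Nirenberg inequality at the $L^2$-critical exponent, and part (iii) by a scaling argument exploiting the strict monotonicity \eqref{f6}. For part (i), I take $u_0$ defined by $u_0(0)=\zeta$ and $u_0(x)=0$ for $x\neq 0$, so $u_0\in S_{\zeta^2}$. A direct computation from \eqref{tidu} gives $\int_{\mathbb{Z}^N}|\nabla u_0|^2\,d\mu=2N\zeta^2$ (the origin contributes $N\zeta^2$ and each of its $2N$ neighbours contributes $\zeta^2/2$) and $\int_{\mathbb{Z}^N}F(u_0)\,d\mu=F(\zeta)$. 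Hence $I(u_0)=N\zeta^2-F(\zeta)<-N\zeta^2<0$ by \eqref{f4}, so $E_{\zeta^2}<0$, and Lemma \ref{lemnonin} gives $m^*<\zeta^2$.

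For part (ii), I aim to show $E_m\ge 0$ for all sufficiently small $m>0$; combined with $E_m\le 0$ from Lemma \ref{lembounded} this forces $E_m=0$ for such $m$, hence $m^*>0$. From \eqref{f1} and \eqref{f5} I pick $C_1,\delta>0$ with $F(t)\le C_1|t|^{2+4/N}$ for $|t|\le\delta$; for $u\in S_m$ with $m\le\delta^2$, Proposition \ref{prointer} gives $\|u\|_\infty\le\sqrt{m}\le\delta$, so this bound applies pointwise and $\int F(u)\,d\mu\le C_1\|u\|_{2+4/N}^{2+4/N}$. The key ingredient is the discrete Gagliardo--Nirenberg inequality at the critical exponent $p=2+\tfrac{4}{N}$ (valid for $N\ge 2$, with sharp constant $J^{p,N}>0$ from \cite{Weinstein}):
$$
\|u\|_{2+4/N}^{2+4/N}\le \frac{1}{J^{p,N}}\|u\|_2^{4/N}\|\nabla u\|_2^2=\frac{m^{2/N}}{J^{p,N}}\|\nabla u\|_2^2,
$$
which yields $I(u)\ge\bigl(\tfrac{1}{2}-\tfrac{C_1}{J^{p,N}}m^{2/N}\bigr)\|\nabla u\|_2^2\ge 0$ once $m\le (J^{p,N}/(2C_1))^{N/2}$. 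The main obstacle is precisely here: the elementary bound $\|u\|_\infty\le\|u\|_2$ alone gives only $\int F(u)\,d\mu\le C_1 m^{1+2/N}$ and hence $E_m\ge -Cm^{1+2/N}$, which is too weak to conclude $E_m=0$; one needs the sharper gradient-weighted inequality inherited from \cite{Weinstein} to trade $\|u\|_\infty^{4/N}\le m^{2/N}$ for $m^{2/N}\|\nabla u\|_2^2$.

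For part (iii), I argue by contradiction: suppose $u\in S_m$ achieves $E_m=0$ for some $0<m<m^*$. By a standard rearrangement argument using the gradient contraction \eqref{abs} as in Lemma \ref{lemachieved}, I may assume $u\ge 0$, and $u\not\equiv 0$ since $m>0$. Choose $t>1$ with $t^2m<m^*$, which is possible because $m<m^*$. By \eqref{f6} the map $s\mapsto F(s)/s^2$ is strictly increasing on $(0,+\infty)$, so $F(tu(x))>t^2F(u(x))$ whenever $u(x)>0$; summing and using strict inequality at any $x_0$ with $u(x_0)>0$ yields $\int F(tu)\,d\mu>t^2\int F(u)\,d\mu$. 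Consequently
$$
I(tu)=\tfrac{t^2}{2}\int|\nabla u|^2\,d\mu-\int F(tu)\,d\mu<t^2\,I(u)=0,
$$
so $tu\in S_{t^2m}$ with $E_{t^2m}\le I(tu)<0$. But $t^2m<m^*$ forces $E_{t^2m}=0$ by Lemma \ref{lemnonin}, a contradiction.
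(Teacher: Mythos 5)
Your proof is correct and follows essentially the same route as the paper for all three parts: the Dirac spike $\zeta\delta_0$ for (i), the critical discrete Gagliardo--Nirenberg inequality combined with the smallness of $\|u\|_\infty$ for (ii), and the dilation $u\mapsto tu$ together with the strict monotonicity of $F(t)/t^2$ for (iii), where your choice of scaling to $t^2m<m^*$ rather than to $m^*$ itself is in fact slightly more robust (it also covers $m^*=+\infty$). Two minor remarks: your kinetic-energy computation $\tfrac12\|\nabla(\zeta\delta_0)\|_2^2=N\zeta^2$ is the correct one (the paper records $2N\zeta^2$, which is harmless under \eqref{f4}); and in (iii) your preliminary reduction to $u\ge 0$ silently invokes \eqref{f3}, which is not among the hypotheses of that part --- though the reduction becomes unnecessary once \eqref{f6} is read, as the paper's own model nonlinearities force, as monotonicity in $|t|$.
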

\begin{proof}
   (i) Assume that \eqref{f1} and \eqref{f4} hold. Set $$
    \delta_0(x):=
     \begin{cases}
       1,  &x=0,\\
       0, &x \neq 0.
      \end{cases}
    $$
Then $\zeta \delta_0 \in S_{\zeta^2}$. By \eqref{f4}, we have
$$
E_{\zeta^2}\leq I(\zeta \delta_0)=2N{\zeta^2}-F(\zeta)= 2N\zeta^2(1-\frac{F(\zeta)}{2N\zeta^2})<0.
$$
By Lemma \ref{lemnonin0} and \ref{lemnonin}, we know that the mapping $m \mapsto E_m$ is non-increasing and $E_{m^*}=0$. Thus, $m^*\in [0,\zeta^2)$.

(ii) Next, we use the following discrete Gagliardo-Nirenberg inequality
\begin{equation}\label{eqgn}
    \norm{u}^{2+\frac{4}{N}}_{2+\frac{4}{N}} \leq C_N\norm{\nabla u}_2^2\norm{u}^{\frac{4}{N}}_{2},\quad \forall u \in H^1(\mathbb{Z}^N),
\end{equation}
where $C_N>0$ depends only on $N$. For $N=2$, we refer to inequality (4.11) with $\sigma=1$ in \cite{Weinstein}; For $N \geq 3$, we recall the following famous Sobolev inequality on lattice graphs due to \cite[Lemma 2]{Stefanov} and \cite[Theorem 3.6]{hua2015time},
$$
\int_{\mathbb{Z}^N}|u|^{2^*}\,d\mu \leq S_N\left(\int_{\mathbb{Z}^N}|\nabla u|^{2}\, d\mu \right)^\frac{2^*}{2},\quad \forall u \in \ell^2(\mathbb{Z}^N),
$$
where $2^*=\frac{2N}{N-2}$ and $S_N>0$  depends only on $N$. Hence, by Proposition \ref{lemequiv} and H\"{o}lder's inequality, we obtain
$$
\begin{aligned}
    \int_{\mathbb{Z}^N}|u|^{2+\frac{4}{N}}\,d\mu =\int_{\mathbb{Z}^N}|u|^{2}|u|^{\frac{4}{N}}\,d\mu &\leq \left(\int_{\mathbb{Z}^N}|u|^{2\cdot\frac{2^*}{2}}\,d\mu\right)^\frac{2}{2^*}\left(\int_{\mathbb{Z}^N}|u|^{\frac{4}{N}\cdot\frac{N}{2}}\,d\mu\right)^\frac{2}{N}\\ &\leq S_N^\frac{2}{2^*} \norm{\nabla u}^{2}_2 \norm{u}^{\frac{4}{N}}_{2}.
\end{aligned}
$$
Now assume that \eqref{f1} and \eqref{f5} hold. By \eqref{f5}, we know that there exist $C>0$ and $t_1 >0$ such that
$$
F(t) \leq C\abs{t}^{2+\frac{4}{N}}, \quad \forall \abs{t} < t_1.
$$
Let $0<m<t_1^2$. Then, by Proposition \ref{prointer}, we know that,  $\norm{u}_\infty <t_1$ for all $u \in S_m$. Thus, by \eqref{eqgn}, it follows that for all $u \in S_m$,
$$
I(u) = \frac{1}{2}\norm{\nabla u}_2^2 - \int_{\mathbb{Z}^N}F(u)\,d\mu\geq \frac{1}{2}\norm{\nabla u}_2^2 -C\norm{u}^{2+\frac{4}{N}}_{2+\frac{4}{N}} \geq \frac{1}{2}\norm{\nabla u}_2^2 -CC_Nm^{\frac{2}{N}}\norm{\nabla u}_2^2.
$$
Hence, for $m>0$ small enough, we have $E_m\geq 0$. By Lemma \ref{lemnonin}, we conclude $m^*\in (0,+\infty]$.

(iii) Finally, assume that \eqref{f1} and \eqref{f6} hold. Argue by contradiction, assume that $E_{m}$ is achieved for some $m \in (0,m^{*})$. Then, there exists $u\in S_{m}$ such that
$
I(u)=E_{m}=0
$.
Clearly, $u \neq 0$ and $v=\frac{\sqrt{m^{*}}}{\sqrt{m}}u\in S_{m^{*}}$ and then, by \eqref{f6},
$$
E_{m^{*}}\le I(v)= \frac{m^{*}}{m}\int_{\mathbb{Z}^{N}}|\nabla u|^{2}\,d\mu-\int_{\mathbb{Z}^{N}}F(\frac{\sqrt{m^{*}}}{\sqrt{m}}u)\,d\mu< \frac{m^{*}}{m}I(u)=0.
$$
This contradicts the fact that  $E_{m^*} = 0$. This completes the proof of Lemma \ref{lemem}.
\end{proof}

It is the position to provide the proofs of Theorems \ref{th1} and \ref{th2}.
\begin{proof}[Proofs of Theorems \ref{th1} and \ref{th2}]
From Lemma \ref{lembounded}-\ref{lemem}, we complete the proof of Theorem \ref{th1}.

Now, we prove Theorem \ref{th2}. Assume that \eqref{f1} and \eqref{f7} hold. Repeating the argument of \cite[Lemma 3]{Stefanov} and \cite[Remark 6.2]{DSR}, for all $k \in \mathbb{N}^+$, we obtain $u_k$ satisfies $u_k \in S_m$,
\begin{equation}\label{eqgnc}
     \lim_{k \to +\infty}\norm{u_k}_\infty=0, \quad \norm{\nabla u}_2^2 \leq C_1k^{-2}\quad \text{and }\quad\norm{u}_{2+\frac{4}{N}}^{2+\frac{4}{N}} \geq C_2k^{-2}
\end{equation} for some constants $C_1,C_2 >0$ depending on $m$ and $N$.
On the other hand, by \eqref{f7}, for any $G>0$, there exists $t_G>0$ such that
$$
\abs{F(t)} \geq G t^{2+\frac{4}{N}}, \quad \forall \abs{t}< t_G.
$$
For $k \in \mathbb{N}^+$ sufficiently large, we have $\norm{u_k}_\infty <t_G$, and thus, by \eqref{eqgnc}, we conclude
$$
I(u_k) = \frac{1}{2}\norm{\nabla u_k}_2^2 - \int_{\mathbb{Z}^N}F(u_k)\,d\mu\leq \frac{1}{2}\norm{\nabla u_k}_2^2 -G\norm{u_k}^{2+\frac{4}{N}}_{2+\frac{4}{N}} \leq \frac{C_1k^{-2}}{2} -GC_2k^{-2},
$$
which implies that $E_m<0$ for all $m>0$. Then, the subadditivity inequality \eqref{eqsa} implies the mapping $m \mapsto E_m$ is strictly decreasing. Moreover, by Lemma \ref{lemnonin}, the mapping $m \mapsto E_m$ is continuous.

 Next, assume in addition that \eqref{f3} holds. By repeating the argument in Lemma \ref{lemachieved}, we obtain a Schwarz symmetric function $u\in H^1(\mathbb{Z}^N)\backslash\{0\}$ satisfying $\norm{u}^2_2\leq m$ and $I(u) \leq E_m$. Argue by contradiction, suppose that $\norm{u}^2_2=m_1<m$. Then,
$$
E_{m_1}\leq I(u) \leq E_m,
$$
which leads to a contradiction, since the mapping $m \mapsto E_m$ is strictly decreasing. Hence, $\norm{u}^2_2=m$, and the infimum is attained. Repeating the argument in Lemma \ref{lemachieved}, we conclude that $u$ is positive. This completes the proof of  Theorem \ref{th2}.
\end{proof}
\subsection*{Conflict of interest}

On behalf of all authors, the corresponding author states that there is no conflict of interest.

\subsection*{Data Availability Statements}
Data sharing not applicable to this article as no datasets were generated or analysed during the current study.

\subsection*{Acknowledgements}
C. Ji is supported by National Natural Science Foundation of China (No. 12171152).
  
\end{document}